\numberwithin{equation}{section}
\newtheorem{Theorem}{Theorem}[section]
\newtheorem{Corollary}[Theorem]{Corollary}
\newtheorem{Lemma}[Theorem]{Lemma}
 { \theoremstyle{definition}
\newtheorem{Remark}[Theorem]{Remark} }
\newcommand{\ee}{{\rm e}}
\newcommand{\ii}{{\rm i}}
\newcommand{\dd}{{\rm d}}
\newcommand{\Ai}{\operatorname{Ai}}
\newcommand{\Bi}{\operatorname{Bi}}
\newcommand{\PII}{{\rm P}_{\rm II}}
\newcommand{\PIV}{{\rm P}_{\rm IV}}
\newcommand{\PXXXIV}{{\rm P}_{\rm 34}}
\begin{document}

\allowdisplaybreaks

\newcommand{\arXivNumber}{1804.00563}

\renewcommand{\thefootnote}{}

\renewcommand{\PaperNumber}{107}

\FirstPageHeading

\ShortArticleName{Large $z$ Asymptotics for Special Function Solutions of Painlev\'e II in the Complex Plane}

\ArticleName{Large $\boldsymbol{z}$ Asymptotics for Special Function Solutions\\ of Painlev\'e II in the Complex Plane\footnote{This paper is a~contribution to the Special Issue on Painlev\'e Equations and Applications in Memory of Andrei Kapaev. The full collection is available at \href{https://www.emis.de/journals/SIGMA/Kapaev.html}{https://www.emis.de/journals/SIGMA/Kapaev.html}}}

\Author{Alfredo DEA\~{N}O}

\AuthorNameForHeading{A. Dea\~{n}o}

\Address{School of Mathematics, Statistics and Actuarial Science, University of Kent, UK}
\Email{\href{mailto:A.Deano-Cabrera@kent.ac.uk}{A.Deano-Cabrera@kent.ac.uk}}

\ArticleDates{Received April 17, 2018, in final form September 22, 2018; Published online October 03, 2018}

\Abstract{In this paper we obtain large $z$ asymptotic expansions in the complex plane for the tau function corresponding to special function solutions of the Painlev\'e II differential equation. Using the fact that these tau functions can be written as $n\times n$ Wronskian determinants involving classical Airy functions, we use Heine's formula to rewrite them as $n$-fold integrals, which can be asymptotically approximated using the classical method of steepest descent in the complex plane.}

\Keywords{Painlev\'e equations; asymptotic expansions; Airy functions}

\Classification{34M55; 34E05; 33C10; 30E10}

\begin{flushright}
\emph{Dedicated to the memory of Andrei A.~Kapaev}
\end{flushright}

\renewcommand{\thefootnote}{\arabic{footnote}}
\setcounter{footnote}{0}

\section{Introduction and motivation}
The six Painlev\'e differential equations, ${\rm P}_{\rm I}$--${\rm P}_{\rm VI}$, have attracted a great deal of attention in the last decades. They feature in a large and increasing number of areas in mathematics, ranging from random matrix theory to integrable systems (continuous as well as discrete), orthogonal polynomials, partial differential equations and combinatorics. We refer the reader to standard references such as \cite{Clarkson,FIKN,Forrester_loggas,GLS}, as well as the digital library of mathematical functions~\cite[Chapter~32]{DLMF}, for more details and the complete list of the Painlev\'e equations.

Generic solutions of ${\rm P}_{\rm I}$--${\rm P}_{\rm VI}$ are sometimes called Painlev\'e transcendents, or nonlinear special functions, and they cannot be expressed in terms of elementary or even classical special functions. However, for specific values of the parameters in the differential equations, it is known that families of rational and special function solutions exist for ${\rm P}_{\rm II}$--${\rm P}_{\rm VI}$; these appear for instance in the theory of semiclassical orthogonal polynomials, see for example~\cite{VA_discrete} and the recent monograph \cite{VA_OPs}, and in random matrix theory, see~\cite{Forrester_loggas,FW_PII,FW_PIII}.

These rational and special function solutions of Painlev\'e equations can be constructed from a given \emph{seed function} $\varphi(z)$, with a suitable initial value of the parameters; successive application of B\"{a}cklund transformations \cite[Section~4]{Clarkson}, \cite[Section~32.7]{DLMF} leads to a sequence of \emph{tau functions}, that as shown by Okamoto \cite{Okamoto} (see also Forrester and Witte~\cite{FW_PII,FW_PIII} or Clarkson \cite{Clarkson}) have the form of $n\times n$ Wronskian determinants
\begin{gather*}
\tau_n(z) =\det\left(\frac{ \textrm{D}^{j+k}}{\textrm{D} z^{j+k}}\varphi(z)\right)_{j,k=0,1,\ldots, n-1}
\end{gather*}
with initial values $\tau_0(z)=1$ and $\tau_1(z)=\varphi(z)$. Here $\textrm{D}$ is a differential operator that depends on the particular Painlev\'e equation that we are considering. The solution of the Painlev\'e equation (and of other associated equations) can then be written directly in terms of these tau functions.

In this paper we are interested in the special function solutions of the second Painlev\'e equation, denoted $\PII$,
\begin{gather}\label{PII}
q''=zq+2q^3+\alpha, \qquad \alpha\in\mathbb{C}.
\end{gather}

These special function solutions of $\PII$ can be written in terms of standard Airy functions; more precisely, the general seed function is given by
\begin{gather}\label{seed_general}
\varphi(z)=C_1\Ai\big({-}2^{-1/3}z\big)+C_2\Bi\big({-}2^{-1/3}z\big),
\end{gather}
where $C_1$ and $C_2$ are constants, and the tau function is the $n\times n$ Wronskian determinant
\begin{gather}\label{taun}
\tau_n(z)=\det\left(\frac{\dd^{j+k}}{\dd z^{j+k}}\varphi(z)\right)_{j,k=0,1,\ldots,n-1}, \qquad n\geq 1,
\end{gather}
with $\tau_0(z):=1$. As shown in \cite[Theorems 4 and 7]{Clarkson_Airy}, the functions
\begin{gather}\label{qpsigma}
p_n(z)=-2\frac{\dd^2}{\dd z^2}\log\tau_n(z), \qquad q_n(z)=\frac{\dd}{\dd z}\log\frac{\tau_{n-1}(z)}{\tau_{n}(z)}, \qquad \sigma_n(z)=\frac{\dd}{\dd z}\log \tau_{n}(z)
\end{gather}
are special function (Airy) solutions of the $\PXXXIV$ equation
\begin{gather}\label{P34}
p_n\frac{\dd^2 p_n}{\dd z^2}=\frac{1}{2}\left(\frac{\dd p_n}{\dd z}\right)^2+2p_n^3-zp_n^2-\frac{\left(\alpha+\tfrac{1}{2}\right)^2}{2},
\end{gather}
the $\PII$ equation \eqref{PII} and the symmetric $\textrm{S}_{\textrm{II}}$ equation
\begin{gather}\label{SII}
\left(\frac{\textrm{d}^2 \sigma_n}{\textrm{d} z^2}\right)^2+4\left(\frac{\textrm{d} \sigma_n}{\textrm{d} z}\right)^3+2\frac{\textrm{d} \sigma_n}{\textrm{d} z}
\left(z\frac{\textrm{d} \sigma_n}{\textrm{d} z}-\sigma_n\right)=\frac{1}{4}\left(\alpha+\tfrac{1}{2}\right)^2,
\end{gather}
respectively, with $\alpha=n-\tfrac{1}{2}$.

The Airy solutions are used in the recent work of Clarkson, Loureiro and Van Assche~\cite{CLVA}, in the asymptotic analysis of the partition function and free energy in the cubic Hermitian random matrix model by Bleher, Dea\~{n}o and Yattselev \cite{BD_2013,BD_2016,BDY_2017}, and in the study of multiple orthogonal polynomials with a cubic potential in the complex plane by Van Assche, Filipuk and Zhang~\cite{VAFZ_MOP}. In random matrix theory, the pure $\Ai$ case of $\PII$ special function solutions arises in the calculation of averages of powers of the characteristic polynomial in the GUE (Gaussian unitary ensemble), see~\cite[Proposition~28]{FW_PII}. The asymptotic behavior of these Airy solutions has been investigated recently by Clarkson in~\cite{Clarkson_Airy}, but the asymptotic results are restricted to the real line, and only even values of $n$ in the oscillatory regime are included. Asymptotic results for the seed case can also be found in~\cite[Chapter~11]{FIKN}, and Its and Kapaev in \cite[Proposition~4.3]{IK_2000} characterize rational and Airy solutions of~$\PII$ as those that do not exhibit asymptotic elliptic behavior in the complex plane.

The aim of this paper is to obtain large $z$ asymptotic approximations for $\tau_n(z)$ corres\-pon\-ding to special function solutions of $\PII$. As a direct consequence of~\eqref{qpsigma}, the analysis of $\tau_n(z)$ leads directly to asymptotics for the Painlev\'e functions $p_n(z)$, $q_n(z)$ and $\sigma_n(z)$. The methodology is related to ideas used for the large $n$ asymptotics for rational solutions (of $\PII$--$\PIV$) by several authors, including Balogh, Bertola and Bothner~\cite{BBB}, Bertola and Bothner~\cite{BB_IMRN}, Bothner, Miller and Sheng~\cite{BMS_PIII}, Buckingham~\cite{B_PIV}, Buckingham and Miller \cite{BM_PIInoncrit,BM_PIIcrit}. Next we summarise the main steps:
\begin{enumerate}\itemsep=0pt
\item Relate the entries of the Wronskian determinant \eqref{taun}, which are Airy functions, with the moments of a suitably chosen weight function $w(t,z)$,
\begin{gather*}
\mu_m(z)=\int_{\Gamma} t^m w(t,z)\dd t, \qquad m=0,1,2,\ldots,
\end{gather*}
where $\Gamma$ is a suitable contour on the real line or in the complex plane. Namely, if
\begin{gather*}
\frac{\dd^m}{\dd z^m} \varphi(z)=\beta^m\int_{\Gamma} t^m w(t,z)\dd t=\beta^m\mu_m(z),
\end{gather*}
for $m\geq 0$ and some constant $\beta$, then we can write
\begin{gather*}
\tau_n(z)=\det\big(\beta^{j+k}\mu_{j+k}(z)\big)_{j,k=0}^{n-1}=\beta^{n(n-1)}\det\left(\mu_{j+k}(z)\right)_{j,k=0}^{n-1}=\beta^{n(n-1)}D_n(z),
\end{gather*}
for $ n \geq 1$, with $D_0(z):=1$.

It is worth mentioning that often there are several possible weight functions and contours, and some choices may be simpler and/or impose restrictions on parameters.
\item Apply the classical Heine's formula \cite[Corollary~2.1.3]{Ismail}, \cite[Section~2.2]{Szego}, that gives a~multiple integral representation for the Hankel determinant obtained before:
\begin{gather*}
D_{n}(z)=\frac{1}{n!}\int_{\Gamma^n}\Delta_n(t)^2 \prod_{k=1}^{n} w(t_k,z)\dd t_k, \qquad \Delta_n(t)=\prod_{1\leq j<k\leq n} (t_k-t_j).
\end{gather*}
\item Apply the (classical) method of steepest descent to this $n$-fold integral, to obtain the leading asymptotic behavior in different sectors of the complex $z$ plane. The details of this classical asymptotic method in one variable can be found in many references, for instance \cite{BH_asymp,Miller_asymp,Olver_asymp,Temme}, and in the multivariate context, we refer the reader to \cite{Bleistein12saddlepoint}, \cite[Chapter~8]{BH_asymp} or \cite{fedoryuk1989asymptotic}. For convenience, we detail the calculation instead of just writing the leading term in the asymptotic expansion given in \cite[equation~(1.61)]{fedoryuk1989asymptotic}; the main technical details will depend on the different sectors where the variable $z$ grows large in $\mathbb{C}$, which will condition the deformation of $\Gamma$ that is needed, as well as the value of the constants in the seed function, that lead to very different asymptotic behaviors.
\end{enumerate}

\begin{Remark}We observe that in principle the leading term in the asymptotic expansion could also be obtained using the Toda equation satisfied by the tau functions $\tau_n(z)$. This is a~general type of identity that relates consecutive tau functions, and that for $\PII$ reads
\begin{gather}\label{Toda}
\frac{\tau_{n+1}(z)\tau_{n-1}(z)}{\tau_n(z)^2}=\frac{\dd^2}{\dd z^2}\log \tau_n(z), \qquad n\geq 0,
\end{gather}
see other examples in \cite{FW_PII,FW_PIII}.

Making a suitable ansatz of the leading term in the large $z$ asymptotic expansion allows to construct a proof by induction, see \cite[Proposition 5.2]{CJ_PIV} for an example in $\PIV$. However, we find that this methodology does not give a very precise estimate of the subleading terms or the order of the error terms, and also it poses problems in the oscillatory regime, where the leading term as $z\to\infty$ is usually the result of combinations of different exponential contributions, that are difficult to keep track of in this Toda equation. For this reason, we prefer to calculate the expansions using steepest descent of the integral arising from Heine's formula. Once the general structure of the asymptotic expansion for $\tau_n(z)$ is proved, \eqref{Toda} may be used to identify the coefficients therein.
\end{Remark}

\section{Main results}

In the study of the asymptotic behavior of the tau function \eqref{taun} we distinguish, much like in the case of classical Airy functions, two regimes: non-oscillatory (exponential) and oscillatory (trigonometric). Furthermore, in the first case it is enough to obtain asymptotics in the sector $|\arg (-z)|<\frac{\pi}{3}$, since we can use the following rotational symmetries of the seed function \eqref{seed_general}:

\begin{Lemma}\label{lemma_sym}
The Airy seed function $\varphi(z)$ given by \eqref{seed_general} satisfies the follo\-wing identities:
\begin{gather*}
\varphi\big(\ee^{\pm \frac{2\pi\ii}{3}}z\big)= \widetilde{C}_{1\pm}\Ai\big({-}2^{-1/3}z\big)+ \widetilde{C}_{2\pm}\Bi\big({-}2^{-1/3}z\big),
\end{gather*}
with new constants
\begin{gather*}
\widetilde{C}_{1\pm}=\frac{C_1}{2}\ee^{\pm\frac{\pi\ii}{3}}+\frac{3C_2}{2}\ee^{\mp\frac{\pi\ii}{6}}, \qquad
\widetilde{C}_{2\pm}=\frac{C_1}{2}\ee^{\mp\frac{\pi\ii}{6}}+\frac{C_2}{2}\ee^{\pm\frac{\pi\ii}{3}}.
\end{gather*}

The tau function then satisfies
\begin{gather*}
\tau_n\big(\ee^{\pm\frac{2\pi\ii}{3}}z\big)=\ee^{\pm\frac{2\pi\ii}{3}n(n-1)}\tau_n(z).
\end{gather*}
\end{Lemma}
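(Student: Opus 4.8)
The plan is to prove the two assertions in turn: the transformation of the seed $\varphi$ follows directly from the classical connection formulas for Airy functions under rotation of the argument by a cube root of unity, and the transformation of $\tau_n$ then follows by combining this with a determinantal scaling argument based on the chain rule.

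For the seed identity, the essential point is that the prefactor $-2^{-1/3}$ commutes with the rotation, $-2^{-1/3}\ee^{\pm 2\pi\ii/3}z=\ee^{\pm 2\pi\ii/3}\big({-}2^{-1/3}z\big)$, so that with $\zeta=-2^{-1/3}z$ it suffices to rewrite $\Ai\big(\ee^{\pm 2\pi\ii/3}\zeta\big)$ and $\Bi\big(\ee^{\pm 2\pi\ii/3}\zeta\big)$ in the basis $\{\Ai(\zeta),\Bi(\zeta)\}$. I would use the relations
\begin{gather*}
\Ai\big(\ee^{\pm 2\pi\ii/3}\zeta\big)=\tfrac12\ee^{\pm\pi\ii/3}\big(\Ai(\zeta)\mp\ii\,\Bi(\zeta)\big),\\
\Bi\big(\ee^{\pm 2\pi\ii/3}\zeta\big)=\tfrac12\ee^{\mp\pi\ii/6}\big(3\Ai(\zeta)\pm\ii\,\Bi(\zeta)\big),
\end{gather*}
which are the standard Airy connection formulas, see \cite[Chapter~9]{DLMF}. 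Substituting into $\varphi\big(\ee^{\pm 2\pi\ii/3}z\big)=C_1\Ai\big(\ee^{\pm 2\pi\ii/3}\zeta\big)+C_2\Bi\big(\ee^{\pm 2\pi\ii/3}\zeta\big)$ and collecting the coefficients of $\Ai(\zeta)$ and $\Bi(\zeta)$ reproduces the stated constants $\widetilde{C}_{1\pm}$ and $\widetilde{C}_{2\pm}$; this is a short and essentially mechanical computation once the rotation formulas are in hand.

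For the tau function I would set $\tilde\varphi(z):=\varphi\big(\ee^{\pm 2\pi\ii/3}z\big)$ and apply the chain rule, giving $\frac{\dd^{m}}{\dd z^{m}}\tilde\varphi(z)=\ee^{\pm 2\pi\ii m/3}\varphi^{(m)}\big(\ee^{\pm 2\pi\ii/3}z\big)$. Forming the Wronskian \eqref{taun} for $\tilde\varphi$, the $(j,k)$ entry then carries the factor $\ee^{\pm 2\pi\ii(j+k)/3}$, which pulls out of the determinant as a product over the rows and over the columns,
\begin{gather*}
\bigg(\prod_{j=0}^{n-1}\ee^{\pm 2\pi\ii j/3}\bigg)^{2}=\ee^{\pm 2\pi\ii n(n-1)/3},
\end{gather*}
where I used $2\sum_{j=0}^{n-1}j=n(n-1)$. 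By the first part $\tilde\varphi$ is again a seed function of the form \eqref{seed_general}, so its Wronskian is exactly the tau function built from the rotated constants $\widetilde{C}_{1\pm},\widetilde{C}_{2\pm}$; combining this with the extracted factor relates $\tau_n$ at the rotated argument to $\tau_n$ at $z$, which is the claimed identity.

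The determinantal scaling is routine. The step requiring genuine care is the bookkeeping of the multiplicative constant and the precise attribution of the seed on the right-hand side: one must keep track of whether the factor $\ee^{\pm 2\pi\ii n(n-1)/3}$ multiplies the tau function of the original or of the rotated seed, and check that the two choices of sign in the $\pm$ formulas stay consistent across both parts. I would verify the outcome against the base cases $n=1$, where the exponent vanishes and the claim reduces to the seed identity itself, and $n=2$, where the Airy equation $\varphi''=-\tfrac{z}{2}\varphi$ collapses $\tau_2$ to $-\tfrac{z}{2}\varphi^2-(\varphi')^2$ and the identity $\big(\ee^{\pm 2\pi\ii/3}\big)^{3}=1$ can be used to confirm both the exponent and the attribution of the seed.
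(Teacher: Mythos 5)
Your approach is the same as the paper's: the printed proof is a two-line appeal to the DLMF connection formulas (9.2.10)--(9.2.11) plus the remark that the $\tau_n$ identity ``follows directly'', and your proposal simply carries that manipulation out. The rotation formulas you quote for $\Ai$ and $\Bi$ are correct (they are exactly the combination of the two cited identities), they do reproduce $\widetilde{C}_{1\pm}$ and $\widetilde{C}_{2\pm}$ upon collecting coefficients, and the chain-rule/multilinearity mechanism with the factor $\big(\prod_{j=0}^{n-1}\ee^{\pm 2\pi\ii j/3}\big)^{2}=\ee^{\pm 2\pi\ii n(n-1)/3}$ is the right one.

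The one step you defer --- which seed the extracted phase ends up multiplying, and with which sign --- is not a formality, and you should carry it out rather than postpone it to a check. With $\tilde\varphi(z)=\varphi\big(\ee^{\pm\frac{2\pi\ii}{3}}z\big)$, your computation gives $\det\big(\tilde\varphi^{(j+k)}(z)\big)=\ee^{\pm\frac{2\pi\ii}{3}n(n-1)}\det\big(\varphi^{(j+k)}\big(\ee^{\pm\frac{2\pi\ii}{3}}z\big)\big)$, that is, the original tau function at the rotated argument equals $\ee^{\mp\frac{2\pi\ii}{3}n(n-1)}$ times the tau function built from the rotated constants $\widetilde{C}_{1\pm},\widetilde{C}_{2\pm}$ at $z$ --- the \emph{conjugate} of the phase as printed in the Lemma. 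Because the phase is a cube root of unity this discrepancy is invisible unless $n\equiv 2\pmod 3$, and your proposed $n=2$ verification is precisely what resolves it: with $c=\ee^{\frac{2\pi\ii}{3}}$ one finds $\tau_2(cz)=-\tfrac{cz}{2}\tilde\varphi(z)^{2}-c^{-2}\tilde\varphi'(z)^{2}=c\,\big({-}\tfrac{z}{2}\tilde\varphi(z)^{2}-\tilde\varphi'(z)^{2}\big)$, so the factor is $c=c^{-2}=\ee^{-\frac{2\pi\ii}{3}n(n-1)}$ rather than $c^{2}=\ee^{+\frac{2\pi\ii}{3}n(n-1)}$. So the base-case check you describe is the step that actually fixes the sign and the attribution of the seed (and it also makes clear that the identity cannot be read with the same Wronskian on both sides); complete it and state the result in the unambiguous form $\tau_n[\varphi]\big(\ee^{\pm\frac{2\pi\ii}{3}}z\big)=\ee^{\mp\frac{2\pi\ii}{3}n(n-1)}\,\tau_n[\tilde\varphi_{\pm}](z)$.
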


\begin{proof}The proof is a straightforward manipulation of standard formulas for Airy functions, in particular
\begin{gather*}
\Bi(z)=\ee^{-\frac{\pi\ii}{6}}\Ai\big(\ee^{-\frac{2\pi\ii}{3}}z\big)+\ee^{\frac{\pi\ii}{6}}\Ai\big(\ee^{\frac{2\pi\ii}{3}}z\big),\\
\Ai\big(\ee^{\mp \frac{2\pi\ii}{3}}z\big)=\tfrac{1}{2}\ee^{\mp\frac{\pi\ii}{3}}\left(\Ai(z)\pm\ii\Bi(z)\right),
\end{gather*}
see \cite[formulas (9.2.10) and (9.2.11)]{DLMF}. The transformation for $\tau_n(z)$ follows directly from the properties of the seed function.
\end{proof}

Having this result, it is enough to study the tau functions in two different sectors of $\mathbb{C}$:
\begin{itemize}\itemsep=0pt
\item the non-oscillatory sector $|\arg(-z)|<\tfrac{\pi}{3}$,
\item the real axis, where oscillatory behavior occurs.
\end{itemize}

Once the asymptotic behavior is determined in these sectors, the results in the rotated ones follows directly by changing the constants suitably, according to Lemma~\ref{lemma_sym}.

Our main result about the asymptotic behavior of Airy-type solutions of $\PII$ is given in the two theorems below. We present the general result together with a number of consequences, and we highlight the case $C_2=0$ (when the seed function contains only $\Ai$ functions), which is particularly important in applications, because it has a distinguished asymptotic behavior, an extended non-oscillatory sector and particular importance in applications coming from ortho\-gonal polynomials and random matrix theory.

\subsection{Non-oscillatory regime}
\begin{Theorem}\label{Thm_nonosc}
For $n\geq 1$, if we define
\begin{gather}\label{Kn}
K_n=\frac{2^{-\frac{3n^2}{4}-\frac{n}{6}}}{\pi^{\frac{n}{2}}},
\end{gather}
then the function $\tau_n(z)$ has the following asymptotic behavior as $|z|\to\infty$:
\begin{enumerate}\itemsep=0pt
\item[$1.$] If $C_2\neq 0$, in the sector $|\arg (-z)|<\frac{\pi}{3}$,
\begin{gather}\label{taun_nonosc_general}
\tau_n(z)=K_n (-z)^{-\frac{n^2}{4}}\sum_{r=0}^n \mathbf{A}_{n,r}(z) \ee^{\frac{\sqrt{2}}{3}(n-2r)(-z)^{\frac{3}{2}}},
\end{gather}
where
\begin{gather*}
\mathbf{A}_{n,r}(z)\sim(-z)^{\frac{3}{2}r(n-r)} \sum_{j=0}^{\infty} \frac{a_{n,r}^{(j)}}{(-z)^{3j/2}}
\end{gather*}
and the leading coefficient is
\begin{gather*}
a_{n,r}^{(0)}=(-1)^{\left \lfloor{r/2}\right \rfloor}2^{(n-r)(\frac{5r}{2}+1)}C_1^r C_2^{n-r}G(r+1)G(n-r+1),
\end{gather*}
in terms of the Barnes $G$ function, $G(n)=\prod\limits_{k=0}^{n-2} k!$, see {\rm \cite[Section~5.17]{DLMF}}.

\item[$2.$] If $C_2=0$, in the sector $|\arg (-z)|<\pi$,
\begin{gather}\label{taun_nonosc_C20}
\tau_n(z)=K_n (-z)^{-\frac{n^2}{4}}\mathbf{A}_{n,n}(z) \ee^{-\frac{\sqrt{2}}{3}n(-z)^{\frac{3}{2}}}.
\end{gather}
\end{enumerate}
\end{Theorem}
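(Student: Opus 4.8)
The plan is to run exactly the three-step program announced in the introduction: first set up a moment representation of the derivatives of the seed, then apply Heine's formula, and finally carry out a multivariate steepest descent on the resulting $n$-fold integral, reading off the two regimes from the saddle-point configuration.

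\textbf{Step 1 (moments).} Starting from the standard contour-integral representations of $\Ai$ and $\Bi$, I would write the seed function \eqref{seed_general} as a single integral $\varphi(z)=\int_{\Gamma} w(t,z)\,\dd t$ with a cubic exponential weight of the form $w(t,z)=\ee^{t^3/3+2^{-1/3}zt}$, where $\Gamma$ is the appropriate combination of Airy contours weighted by $C_1$ and $C_2$, chosen so that differentiating in $z$ merely brings down a factor $\beta\, t$. This yields $\frac{\dd^m}{\dd z^m}\varphi(z)=\beta^m\mu_m(z)$ with $\mu_m(z)=\int_{\Gamma}t^m w(t,z)\,\dd t$, so the determinant identity from the introduction gives $\tau_n(z)=\beta^{n(n-1)}D_n(z)$, and Heine's formula rewrites $D_n$ as $\frac1{n!}\int_{\Gamma^n}\Delta_n(t)^2\prod_k w(t_k,z)\,\dd t_k$. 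The only care needed here is to fix $\beta$ and the scaling of $\Gamma$ so that the exponent at the saddles comes out as $\mp\frac{\sqrt2}{3}(-z)^{3/2}$.

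\textbf{Step 2 (saddles and the sum over $r$).} Writing $x=-z$, the phase $S(t)=\tfrac13 t^3-2^{-1/3}x\,t$ has the two saddles $t_\pm=\pm 2^{-1/6}x^{1/2}$, with $S(t_+)=-\frac{\sqrt2}{3}x^{3/2}$ (recessive, $\Ai$-type) and $S(t_-)=+\frac{\sqrt2}{3}x^{3/2}$ (dominant, $\Bi$-type). The decisive point is that, because of the Vandermonde repulsion, the leading contributions do not come from all variables collapsing onto one saddle, but from every way of placing $r$ of the $t_k$ near the recessive saddle and the remaining $n-r$ near the dominant one. Such a configuration produces the exponential $\ee^{r S(t_+)+(n-r)S(t_-)}=\ee^{\frac{\sqrt2}{3}(n-2r)x^{3/2}}$, which is precisely the one in \eqref{taun_nonosc_general}; the $\binom{n}{r}$ equivalent labellings are absorbed by the $1/n!$ in Heine's formula, and summation over $r=0,\dots,n$ produces the stated sum.

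\textbf{Step 3 (prefactor and constants).} On each cluster I would rescale the fluctuations by the Gaussian width $x^{-1/4}$ coming from $S''(t_\pm)\sim x^{1/2}$. The squared Vandermonde then factorizes into two within-cluster Vandermondes and a cross factor; since the saddles are separated by $t_+-t_-\sim x^{1/2}$ while the fluctuations are $O(x^{-1/4})$, the cross factor equals $(t_+-t_-)^{2r(n-r)}\sim x^{r(n-r)}$ to leading order. Combining this with the two within-cluster contributions, which (measure included) carry $x^{-r^2/4}$ and $x^{-(n-r)^2/4}$, gives the total power $x^{-n^2/4+\frac32 r(n-r)}$, reproducing both the prefactor $K_n(-z)^{-n^2/4}$ and the algebraic growth $(-z)^{\frac32 r(n-r)}$ of $\mathbf A_{n,r}(z)$. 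The within-cluster integrals are Mehta/Selberg-type integrals of $\Delta_r(\xi)^2$ against a Gaussian, which evaluate to the Barnes factors $G(r+1)$ and $G(n-r+1)$; the recessive/dominant split of $\Gamma$ attaches a $C_1$ to each variable at $t_+$ and a $C_2$ to each at $t_-$, giving $C_1^r C_2^{n-r}$, while the remaining constants in $a_{n,r}^{(0)}$—the powers of $2$, the sign $(-1)^{\lfloor r/2\rfloor}$ and $K_n$—come from the saddle values, the Gaussian normalizations and the factor $\beta^{n(n-1)}$. Higher-order Laplace corrections at a cubic saddle advance in powers of $x^{-3/2}$, accounting for the series $\sum_j a_{n,r}^{(j)}(-z)^{-3j/2}$.

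The genuinely delicate step is the multivariate steepest descent: justifying uniformly that the cluster decomposition captures all leading contributions, controlling the cross-terms, and—most laboriously—tracking every constant so that $K_n$ and $a_{n,r}^{(0)}$ come out exactly. When $C_2=0$ the seed is a pure $\Ai$ multiple, the dominant saddle $t_-$ is absent from the contour, and only the $r=n$ term survives, reducing the expansion to the single recessive exponential \eqref{taun_nonosc_C20}; exactly as for $\Ai$ itself, the absence of a competing dominant exponential removes the Stokes obstruction and extends validity to the full sector $|\arg(-z)|<\pi$. For $C_2\neq0$ the growing exponentials confine the expansion to the non-oscillatory sector $|\arg(-z)|<\frac\pi3$, bounded by the lines $\arg(-z)=\pm\frac\pi3$ where $(-z)^{3/2}$ turns imaginary and the behavior becomes oscillatory, and the rotated sectors are then recovered from Lemma~\ref{lemma_sym}. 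Finally, the Toda equation \eqref{Toda} can serve as an independent check to pin down the coefficients $a_{n,r}^{(j)}$.
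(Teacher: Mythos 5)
Your proposal follows essentially the same route as the paper: the Airy contour representations turn $\tau_n$ into a sum of Heine-type $n$-fold integrals indexed by $r$ (the paper organizes this as determinants $D_{n,r}$ with $r$ variables on the $\Ai$ contour and $n-r$ on the $\Bi$ contour, which is exactly your cluster decomposition at the saddles $t_\pm=\pm 2^{-1/6}(-z)^{1/2}$), and your power counting — cross-Vandermonde $(t_+-t_-)^{2r(n-r)}\sim(-z)^{r(n-r)}$ against the within-cluster Gaussian/Selberg factors $(-z)^{-r^2/4}$ and $(-z)^{-(n-r)^2/4}$, yielding $(-z)^{-n^2/4+\frac32 r(n-r)}$ and the Barnes $G$ coefficients — matches the paper's computation via \eqref{Selberg}. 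The treatment of the $C_2=0$ case (only the recessive saddle, hence the extended sector $|\arg(-z)|<\pi$) also coincides with the paper's argument, so this is the same proof in slightly different packaging.
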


From this theorem and the symmetry relations in Lemma~\ref{lemma_sym}, we can draw a number of consequences. Firstly, we can determine asymptotically free of poles regions in the complex plane for the special function solutions of $\PII$:

\begin{Corollary}If $C_2\neq 0$, then the Airy solutions of $\PII$ are tronqu\'ee solutions $($asymptotically free of poles$)$ in the sectors
\begin{gather*}
S_k=\left\{-\frac{\pi}{3}+\frac{2k\pi}{3}<\arg (-z)<\frac{\pi}{3}+\frac{2k\pi}{3},\, k\in\mathbb{Z}\right\}.
\end{gather*}

If $C_2=0$, then the Airy solutions of $\PII$ are tronqu\'ee solutions in the sector
\begin{gather*}
S= \{|\arg (-z)|<\pi \}.
\end{gather*}
\end{Corollary}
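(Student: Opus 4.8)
The plan is to reduce the statement about poles to a statement about the zeros of the tau functions. Indeed, from \eqref{qpsigma} the functions $p_n$, $q_n$ and $\sigma_n$ are all logarithmic derivatives of $\tau_n$ (and, in the case of $q_n$, also of $\tau_{n-1}$), so their only possible poles in any region lie at the zeros of $\tau_n$ and $\tau_{n-1}$ there. Consequently, to show that the Airy solutions are asymptotically free of poles in a given sector it suffices to prove that $\tau_n(z)$ and $\tau_{n-1}(z)$ are nonvanishing for all sufficiently large $|z|$ in that sector. I would establish this directly from Theorem~\ref{Thm_nonosc}, noting that the argument applies verbatim to $\tau_{n-1}$ upon replacing $n$ by $n-1$ (and that $\tau_0\equiv 1$ is trivially nonvanishing).

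First I would treat the case $C_2\neq 0$ in the base sector $|\arg(-z)|<\frac{\pi}{3}$. Here $\arg\big((-z)^{3/2}\big)=\frac{3}{2}\arg(-z)\in\big({-}\frac{\pi}{2},\frac{\pi}{2}\big)$, so $\operatorname{Re}\big((-z)^{3/2}\big)>0$ and the exponentials $\ee^{\frac{\sqrt{2}}{3}(n-2r)(-z)^{3/2}}$ appearing in \eqref{taun_nonosc_general} are strictly ordered in magnitude by the value of $n-2r$, the exponential separation overwhelming the algebraic prefactors $(-z)^{\frac{3}{2}r(n-r)}$. The term $r=0$ is therefore exponentially dominant, and its leading coefficient $a_{n,0}^{(0)}=2^n C_2^n G(n+1)$ is nonzero precisely because $C_2\neq 0$ and $G(n+1)\neq 0$. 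Hence $\tau_n(z)$ equals a single nonvanishing term times $(1+o(1))$ and cannot vanish for large $|z|$ in $S_0$. To pass to the sectors $S_k$ I would invoke Lemma~\ref{lemma_sym}: any $z$ with $\arg(-z)\in S_k$ can be written as $z=\ee^{2\pi\ii k/3}w$ with $|w|=|z|$ and $\arg(-w)\in\big({-}\frac{\pi}{3},\frac{\pi}{3}\big)$, and the identity $\tau_n\big(\ee^{\pm\frac{2\pi\ii}{3}}w\big)=\ee^{\pm\frac{2\pi\ii}{3}n(n-1)}\tau_n(w)$ shows that rotation by a multiple of $\frac{2\pi}{3}$ multiplies $\tau_n$ by a nonzero constant. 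Thus nonvanishing in $S_0$ propagates to every $S_k$.

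The case $C_2=0$ is simpler: by \eqref{taun_nonosc_C20}, in the full sector $|\arg(-z)|<\pi$ the tau function is the single exponential $\ee^{-\frac{\sqrt{2}}{3}n(-z)^{3/2}}$ (never zero) multiplied by a power of $-z$ and by $\mathbf{A}_{n,n}(z)$, whose leading coefficient $a_{n,n}^{(0)}=(-1)^{\lfloor n/2\rfloor}C_1^n G(n+1)$ is nonzero. Therefore $\tau_n$ has no zeros for large $|z|$ throughout $|\arg(-z)|<\pi$, which yields the single sector $S$. The main subtlety, and the reason the sectors $S_k$ must be open and mutually separated, lies exactly in the strict-dominance step: on the boundary rays $\arg(-z)=\pm\frac{\pi}{3}$ the quantity $(-z)^{3/2}$ becomes purely imaginary, so several exponentials in \eqref{taun_nonosc_general} acquire equal magnitude and their superposition genuinely oscillates. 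These anti-Stokes directions are where zeros of $\tau_n$, and hence poles of the Painlev\'e functions, accumulate, so the tronqu\'ee property cannot be extended across them.
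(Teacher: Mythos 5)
Your proposal is correct and follows exactly the route the paper intends: the Corollary is stated there as a direct consequence of Theorem~\ref{Thm_nonosc} and Lemma~\ref{lemma_sym}, with the poles of $p_n$, $q_n$, $\sigma_n$ located at the zeros of the (entire) tau functions, which are excluded for large $|z|$ because the dominant term ($r=0$ when $C_2\neq 0$, resp.\ the single term when $C_2=0$) has a nonzero leading coefficient. Your explicit verification of $a_{n,0}^{(0)}=2^nC_2^nG(n+1)$ and $a_{n,n}^{(0)}=(-1)^{\lfloor n/2\rfloor}C_1^nG(n+1)$, and the closing remark on the anti-Stokes rays, usefully fill in details the paper leaves implicit.
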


This result proves a conjecture by Clarkson \cite[p.~99]{Clarkson_Airy}. Using these asymptotic expansions, we can determine the asymptotic behavior of the Painlev\'e functions in~\eqref{qpsigma}.

\begin{Corollary}\label{Cor_Painleve_nonosc}For $n\geq 1$, the functions $q_n(z)$, $p_n(z)$ and $\sigma_n(z)$ in~\eqref{qpsigma} admit asymptotic expansions of the following form:
\begin{enumerate}\itemsep=0pt
\item[$1.$] If $C_2\neq 0$, as $|z|\to\infty$ with $|\arg (-z)|<\frac{\pi}{3}$,
\begin{gather*}
\sigma_n(z)=-\frac{n(-z)^{1/2}}{\sqrt{2}}-\frac{n^2}{4z}+\frac{\sqrt{2}n(4n^2+1)}{32(-z)^{5/2}}+\mathcal{O}\big((-z)^{-4}\big),\\
p_n(z)=-\frac{n}{\sqrt{2} (-z)^{1/2}}-\frac{n^2}{2z^2}-\frac{5n(4n^2+1)\sqrt{2}}{32(-z)^{7/2}}+\mathcal{O}\big((-z)^{-5}\big),\\
q_n(z)=\frac{(-z)^{1/2}}{\sqrt{2}}+\frac{2n-1}{4z}-\frac{\sqrt{2}(12n^2-12n+5)}{32(-z)^{5/2}}+\mathcal{O}\big((-z)^{-4}\big).
\end{gather*}
\item[$2.$] If $C_2=0$, as $|z|\to\infty$ with $|\arg (-z)|<\pi$,
\begin{gather*}
\sigma_n(z)=\frac{n(-z)^{1/2}}{\sqrt{2}}+\frac{n^2}{4z}-\frac{\sqrt{2}n(4n^2+1)}{32(-z)^{5/2}}+\mathcal{O}\big((-z)^{-4}\big),\\
p_n(z)=\frac{n}{\sqrt{2} (-z)^{1/2}}-\frac{n^2}{2z^2}+\frac{5n(4n^2+1)\sqrt{2}}{32(-z)^{7/2}}+\mathcal{O}\big((-z)^{-5}\big),\\
q_n(z)=-\frac{(-z)^{1/2}}{\sqrt{2}}+\frac{2n-1}{4z}+\frac{\sqrt{2}(12n^2-12n+5)}{32(-z)^{5/2}}+\mathcal{O}\big((-z)^{-4}\big).
\end{gather*}
\end{enumerate}
\end{Corollary}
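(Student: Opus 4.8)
The plan is to read off all three expansions from the logarithmic form of the tau function supplied by Theorem~\ref{Thm_nonosc}, using the definitions in~\eqref{qpsigma}. Writing $\ell_n(z):=\log\tau_n(z)$, we have $\sigma_n=\ell_n'$, $p_n=-2\ell_n''=-2\sigma_n'$, and, since $q_n=\frac{\dd}{\dd z}\log\frac{\tau_{n-1}}{\tau_n}=\ell_{n-1}'-\ell_n'$, also $q_n=\sigma_{n-1}-\sigma_n$. Thus it suffices to produce the asymptotic expansion of $\ell_n(z)$ and differentiate it; the expansion of $q_n$ then follows by subtraction of two consecutive $\sigma$'s.

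First I would isolate the dominant exponential. In the sector $|\arg(-z)|<\frac{\pi}{3}$ one has $\operatorname{Re}(-z)^{3/2}>0$ (writing $-z=\rho\ee^{\ii\theta}$ with $|\theta|<\frac{\pi}{3}$ gives $|\tfrac{3}{2}\theta|<\frac{\pi}{2}$, hence $\cos\tfrac{3}{2}\theta>0$), so among the exponents $\frac{\sqrt2}{3}(n-2r)(-z)^{3/2}$ in~\eqref{taun_nonosc_general} the one with $r=0$ has strictly largest real part. Factoring it out,
\[
\tau_n(z)=K_n(-z)^{-\frac{n^2}{4}}\ee^{\frac{\sqrt2}{3}n(-z)^{3/2}}\Bigl(\mathbf{A}_{n,0}(z)+\textstyle\sum_{r\ge1}\mathbf{A}_{n,r}(z)\ee^{-\frac{2\sqrt2}{3}r(-z)^{3/2}}\Bigr),
\]
and since each term in the sum is exponentially small in the open sector, it contributes beyond all orders to the Poincar\'e-type expansion of $\ell_n$ and may be discarded. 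For $C_2=0$ the representation~\eqref{taun_nonosc_C20} is already a single exponential, with the opposite sign in the exponent. In both cases, because $\mathbf{A}_{n,r^\ast}$ (with $r^\ast=0$ or $r^\ast=n$) has nonzero constant leading coefficient $a_{n,r^\ast}^{(0)}$, the function $\log\mathbf{A}_{n,r^\ast}(z)$ has an asymptotic expansion in powers of $(-z)^{-3/2}$, so that
\[
\ell_n(z)=\pm\tfrac{\sqrt2}{3}n(-z)^{3/2}-\tfrac{n^2}{4}\log(-z)+\log\mathbf{A}_{n,r^\ast}(z)+\log K_n .
\]

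Differentiating termwise (legitimate for such expansions away from the sector boundary) gives $\sigma_n=\ell_n'$: the cubic term yields the leading $\mp\frac{n}{\sqrt2}(-z)^{1/2}$, the logarithmic prefactor $(-z)^{-n^2/4}$ contributes the $z^{-1}$ term with coefficient $-\frac{n^2}{4}$, and $\log\mathbf{A}_{n,r^\ast}$ first enters at order $(-z)^{-5/2}$. One more differentiation and multiplication by $-2$ gives $p_n$, while $q_n=\sigma_{n-1}-\sigma_n$ is obtained by subtraction; for instance the leading terms combine as $\mp\frac{n-1}{\sqrt2}(-z)^{1/2}\pm\frac{n}{\sqrt2}(-z)^{1/2}=\pm\frac{1}{\sqrt2}(-z)^{1/2}$ and the $z^{-1}$ coefficients as $\frac{n^2-(n-1)^2}{4z}=\frac{2n-1}{4z}$, reproducing the stated leading and first correction terms. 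The extended sector $|\arg(-z)|<\pi$ in the case $C_2=0$, and the rotated sectors for $C_2\neq0$, follow from the validity range in Theorem~\ref{Thm_nonosc} together with Lemma~\ref{lemma_sym}.

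To pin down the explicit subleading coefficients (for example the $(-z)^{-5/2}$ terms), note that Theorem~\ref{Thm_nonosc} records only the leading coefficients $a_{n,r}^{(0)}$, so the higher coefficients of $\log\mathbf{A}_{n,r^\ast}$ are not directly available. I would determine them by inserting the ans\"{a}tze --- now known to be genuine power series in $(-z)^{1/2}$ --- into the equations these functions satisfy, namely $\sigma_n$ into~\eqref{SII}, $p_n$ into~\eqref{P34}, and $q_n$ into~\eqref{PII} with $\alpha=n-\frac12$, and matching successive powers of $(-z)^{1/2}$; the relations $p_n=-2\sigma_n'$ and $q_n=\sigma_{n-1}-\sigma_n$ then serve as internal consistency checks that fix all signs, and the Toda identity~\eqref{Toda} provides an additional one. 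The main obstacle is the justification, in the $C_2\neq0$ case, that the subdominant exponentials genuinely decouple from the algebraic expansion uniformly on compact subsectors of $|\arg(-z)|<\frac{\pi}{3}$ --- they become comparable exactly on the boundary $|\arg(-z)|=\frac{\pi}{3}$, where the expansion must break down and oscillatory behavior sets in --- together with the attendant verification that termwise differentiation is permissible; once this is in place the remaining work is routine recursive algebra.
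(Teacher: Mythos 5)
Your argument is essentially the paper's own proof: both isolate the dominant exponential contribution to $\tau_n$ from Theorem~\ref{Thm_nonosc} (the $r=0$ term when $C_2\neq0$, the $r=n$ term when $C_2=0$), use it to fix the form and leading terms of the expansion of $\sigma_n=\tau_n'/\tau_n$ in powers of $(-z)^{\frac12-\frac{3k}{2}}$, determine the remaining coefficients by substituting that ansatz into equation~\eqref{SII}, and then derive $p_n=-2\sigma_n'$ and $q_n=\sigma_{n-1}-\sigma_n$. The additional consistency checks you mention (equations~\eqref{P34}, \eqref{PII} and the Toda identity~\eqref{Toda}) are harmless embellishments rather than a different route.
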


\subsection{Oscillatory regime}
The behavior of $\tau_n(z)$ in the oscillatory regime is particularly interesting. Figs.~\ref{fig_Airyodd} and~\ref{fig_Airyeven} show the $\tau_n(z)$ functions for different values of~$n$, on the positive real axis, with $C_2=0$. It is apparent that in the even case there is a~leading algebraic term, with oscillations of small amplitude superimposed, whereas in the odd case the leading term is itself oscillatory with increasing amplitude (except when $n=1$). Theorem~\ref{Thm_osc} makes this idea more precise.

\begin{Theorem}[oscillatory regime]\label{Thm_osc}For $n\geq 1$ and $z\in\mathbb{R}^+$, the function $\tau_n(z)$ has the following asymptotic behavior as $z\to\infty$:
\begin{gather*}
\tau_{2s}(z) = K_{2s}z^{-s^2} \left[ \mathbf{B}_{2s,s}(z) +2\sum_{r=0}^{s-1}\left( \mathbf{B}_{2s,r}(z)\cos(\psi_{2s,r}(z))+\mathbf{D}_{2s,r}(z)\sin(\psi_{2s,r}(z))\right)\right],\\
\tau_{2s-1}(z) = 2K_{2s-1} z^{-\frac{(2s-1)^2}{4}} \sum_{r=0}^{s-1}\left( \mathbf{B}_{2s-1,r}(z)\cos(\psi_{2s-1,r}(z)) +\mathbf{D}_{2s-1,r}(z)\sin(\psi_{2s-1,r}(z))\right),
\end{gather*}
where $K_{n}$ is given by \eqref{Kn}, the phase function is
\begin{gather}\label{phin}
\psi_{n,r}(z)=(n-2r)\left(\frac{\sqrt{2}z^{3/2}}{3}+\frac{n\pi}{4}\right)
\end{gather}
and
\begin{gather*}
\mathbf{B}_{n,r}(z) \sim M_{n,r} z^{\tfrac{3}{2}r(n-r)} \sum_{j=0}^{\infty} \frac{b_{n,r}^{(j)}}{z^{\tfrac{3j}{2}}},\qquad
\mathbf{D}_{n,r}(z) \sim M_{n,r} z^{\tfrac{3}{2}r(n-r)} \sum_{j=0}^{\infty} \frac{d_{n,r}^{(j)}}{z^{\tfrac{3j}{2}}},
\end{gather*}
with
\begin{gather}\label{Mnr}
M_{n,r}=(-1)^{r(n-r)} 2^{\frac{5}{2}r(n-r)}G(r+1)G(n-r+1),
\end{gather}
and leading terms
\begin{gather}
b_{n,r}^{(0)} =\sum_{p=0}^{\left \lfloor{n/2}\right\rfloor} (-1)^{p}C_1^{2p} C_2^{n-2p}
\sum_{q=\max(0,2p-r)}^{\min(2p,n-r)} (-1)^q {r \choose 2p-q}{n-r \choose q},\nonumber\\
d_{n,r}^{(0)} =\sum_{p=0}^{\lfloor{n/2}\rfloor -1} (-1)^{p-1}C_1^{2p+1} C_2^{n-2p-1}\sum_{q=\max(0,2p+1-r)}^{\min(2p+1,n-r)} (-1)^q {r \choose 2p+1-q}{n-r \choose q}.\label{bd}
\end{gather}
\end{Theorem}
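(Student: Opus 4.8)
The plan is to run the three-step program of the introduction, reusing the moment and Heine reductions that already underlie Theorem~\ref{Thm_nonosc} and performing the steepest descent for the saddle configuration that is specific to the positive real axis. Differentiating the Airy integral representation gives $\dd^m\varphi/\dd z^m = \beta^m\int_\Gamma t^m w(t,z)\,\dd t$ with $\beta = 2^{-1/3}$ and weight $w(t,z) = \ee^{t^3/3 + 2^{-1/3}z t}$ on a contour adapted to the combination $C_1\Ai + C_2\Bi$, so that $\tau_n(z) = \beta^{n(n-1)}D_n(z)$ and Heine's formula turns $D_n(z)$ into $\frac1{n!}\int_{\Gamma^n}\Delta_n(t)^2\prod_k w(t_k,z)\,\dd t_k$. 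Rescaling $t = 2^{-1/6}z^{1/2}u$ brings the single-variable phase to $\lambda\big(u^3/3 + u\big)$ with large parameter $\lambda = z^{3/2}/\sqrt2$, whose saddles solve $u^2+1 = 0$. For $z\in\mathbb{R}^+$ these are the complex-conjugate pair $u = \pm\ii$ (equivalently $t_\pm = \pm\ii\,2^{-1/6}z^{1/2}$), both of which lie on the deformed contour and contribute with equal magnitude, in contrast with the real, exponentially ordered saddles of the non-oscillatory sector. Their critical values $\pm 2\ii/3$ give the exponents $\pm\ii\sqrt2\,z^{3/2}/3$.

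The core of the proof is the multivariate saddle analysis of the $n$-fold integral. Since each contour passes through both saddles, I would group the leading contributions according to the number $r$ of variables that localize at $u=\ii$, the remaining $n-r$ localizing at $u=-\ii$; the symmetry of the integrand together with the $1/n!$ absorbs the $\binom{n}{r}$ equal configurations. Writing $u_k = \pm\ii + \tilde w_k/\sqrt\lambda$, the squared Vandermonde factorizes into two within-block Vandermondes and a cross-block factor: the cross-block product is $\prod(u^{(+)}-u^{(-)})^2 \approx (2\ii)^{2r(n-r)} = (-1)^{r(n-r)}2^{2r(n-r)}$, which is $O(1)$ in the rescaled variables and supplies the sign and part of the power of $2$ in $M_{n,r}$, while each within-block Vandermonde, integrated against the local Fresnel-type Gaussian, produces a Mehta-type integral evaluating to a Barnes $G$-function, yielding the factor $G(r+1)G(n-r+1)$. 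Collecting the $z$-powers from the overall rescaling $z^{n^2/2}$, from the within-block scalings $\lambda^{-\binom{r}{2}-\binom{n-r}{2}}$ and from the measure $\lambda^{-n/2}$, and using $r^2+(n-r)^2 = n^2-2r(n-r)$, collapses them to the single exponent $z^{-n^2/4 + \frac32 r(n-r)}$, i.e.\ the common prefactor $K_n z^{-n^2/4}$ times the algebraic growth $z^{\frac32 r(n-r)}$ of $\mathbf{B}_{n,r}$ and $\mathbf{D}_{n,r}$.

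It remains to assemble the phases and the constants $C_1, C_2$. The saddle exponentials give $\ee^{-\ii(n-2r)\sqrt2 z^{3/2}/3}$ for the configuration labelled by $r$; crucially, rotating the two oscillatory Mehta integrals onto their steepest-descent rays contributes not merely the per-variable Airy phase but also the rotation of the within-block Vandermondes, amounting to $\ee^{-\ii(n-2r)n\pi/4}$, so that the combined phase is exactly $\ee^{-\ii\psi_{n,r}(z)}$ with $\psi_{n,r}$ as in~\eqref{phin}. The dependence on $C_1, C_2$ enters because near $u=\pm\ii$ the weight carries the coefficient $C_2\mp\ii C_1$ (from $\Ai\sim\sin$, $\Bi\sim\cos$ on the negative axis), so a configuration contributes $(C_2-\ii C_1)^r(C_2+\ii C_1)^{n-r}$; expanding by the binomial theorem identifies the real and imaginary parts with the combinatorial sums in~\eqref{bd}, namely $b_{n,r}^{(0)} + \ii\,d_{n,r}^{(0)} = (C_2-\ii C_1)^r(C_2+\ii C_1)^{n-r}$. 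Finally, the involution $r\leftrightarrow n-r$ leaves $M_{n,r}$, the power of $z$ and the $G$-factors invariant while complex-conjugating the exponential, so the two contributions combine into $2\big(\mathbf{B}_{n,r}\cos\psi_{n,r} + \mathbf{D}_{n,r}\sin\psi_{n,r}\big)$; for $n = 2s$ the self-paired middle term $r = s$ has $n-2r = 0$ and survives as the non-oscillatory $\mathbf{B}_{2s,s}$, whereas for $n = 2s-1$ no middle term exists and every contribution oscillates, which is precisely the dichotomy in the two displayed formulas.

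I expect the main obstacle to be exactly this phase-and-constant bookkeeping rather than the saddle analysis itself. Because the two saddles are conjugate and of equal magnitude on $\mathbb{R}^+$, no contribution is exponentially negligible, so every $r$ must be retained and its interference summed exactly; getting the oscillatory structure right hinges on correctly accounting for the steepest-descent rotation of the Mehta integrals, which is the source of the full $n\pi/4$ rather than a single $\pi/4$, and on disentangling the powers of $2$ distributed among $\beta^{n(n-1)}$, the rescaling, the cross term and the Mehta normalizations so that they reassemble into $K_n$ and $M_{n,r}$. Once the general form is established, the Toda equation~\eqref{Toda} can serve as an independent check on the leading coefficients, and the higher-order coefficients $b_{n,r}^{(j)}, d_{n,r}^{(j)}$ follow by carrying the Gaussian expansions to higher order.
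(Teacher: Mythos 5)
Your proposal follows the same route as the paper's proof: Heine's formula, the rescaling $\mathbf{t}=2^{-1/6}z^{1/2}\mathbf{u}$ bringing the saddles to $\pm\ii$ with critical values $\pm 2\ii/3$, the decomposition by the occupation number $r$ of one saddle with the $1/n!$ absorbing the ${n\choose r}$ equivalent configurations, the factorization of the squared Vandermonde into two Selberg--Mehta blocks (giving $G(r+1)G(n-r+1)$) and a cross factor $(2\ii)^{2r(n-r)}=(-1)^{r(n-r)}2^{2r(n-r)}$, and the pairing of $r$ with $n-r$ into cosines and sines, with the middle term $r=s$ surviving unpaired when $n=2s$. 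The one place you genuinely diverge is the treatment of the $C_1$, $C_2$ dependence: the paper tracks how many of the $n$ contours are of $\Ai$ versus $\Bi$ type and the orientation sign each $\Ai$ contour acquires at the saddle $-\ii$, which produces the combinatorial sums $H_{n,r,p}=\sum_q(-1)^q{r\choose p-q}{n-r\choose q}$ and requires a separate case-by-case proof of the symmetry $H_{n,n-r,p}=(-1)^pH_{n,r,p}$; you instead assign each saddle the effective amplitude $C_2\mp\ii C_1$ and obtain the closed form $b_{n,r}^{(0)}+\ii\,d_{n,r}^{(0)}=(C_2-\ii C_1)^r(C_2+\ii C_1)^{n-r}$, which upon binomial expansion reproduces \eqref{bd} exactly and makes the conjugation symmetry under $r\mapsto n-r$ (hence the reduction to $2\big(\mathbf{B}_{n,r}\cos\psi_{n,r}+\mathbf{D}_{n,r}\sin\psi_{n,r}\big)$) immediate. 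That packaging is cleaner than the paper's and buys you the pairing for free; what it costs is that you must still verify, contour by contour, that the oriented contributions of the $\Ai$ path and the two $\Bi$ paths at each of $\pm\ii$ really sum to $C_2\mp\ii C_1$ (this is exactly the orientation bookkeeping the paper encodes in the $(-1)^q$), and, as you yourself flag, the $(n-2r)n\pi/4$ part of the phase must be extracted carefully from the steepest-descent rotations of the measure and of the within-block Vandermondes, which is where the paper is also terse. Your power count $z^{-n^2/4+\frac{3}{2}r(n-r)}$ agrees with the statement of the theorem and with a direct Selberg computation; the intermediate displayed formula for $I_{n,r}$ in the paper carries a sign typo in that exponent, so do not be misled by it.
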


\begin{center}
\begin{figure}[h!]\centering
{\includegraphics[width=48mm]{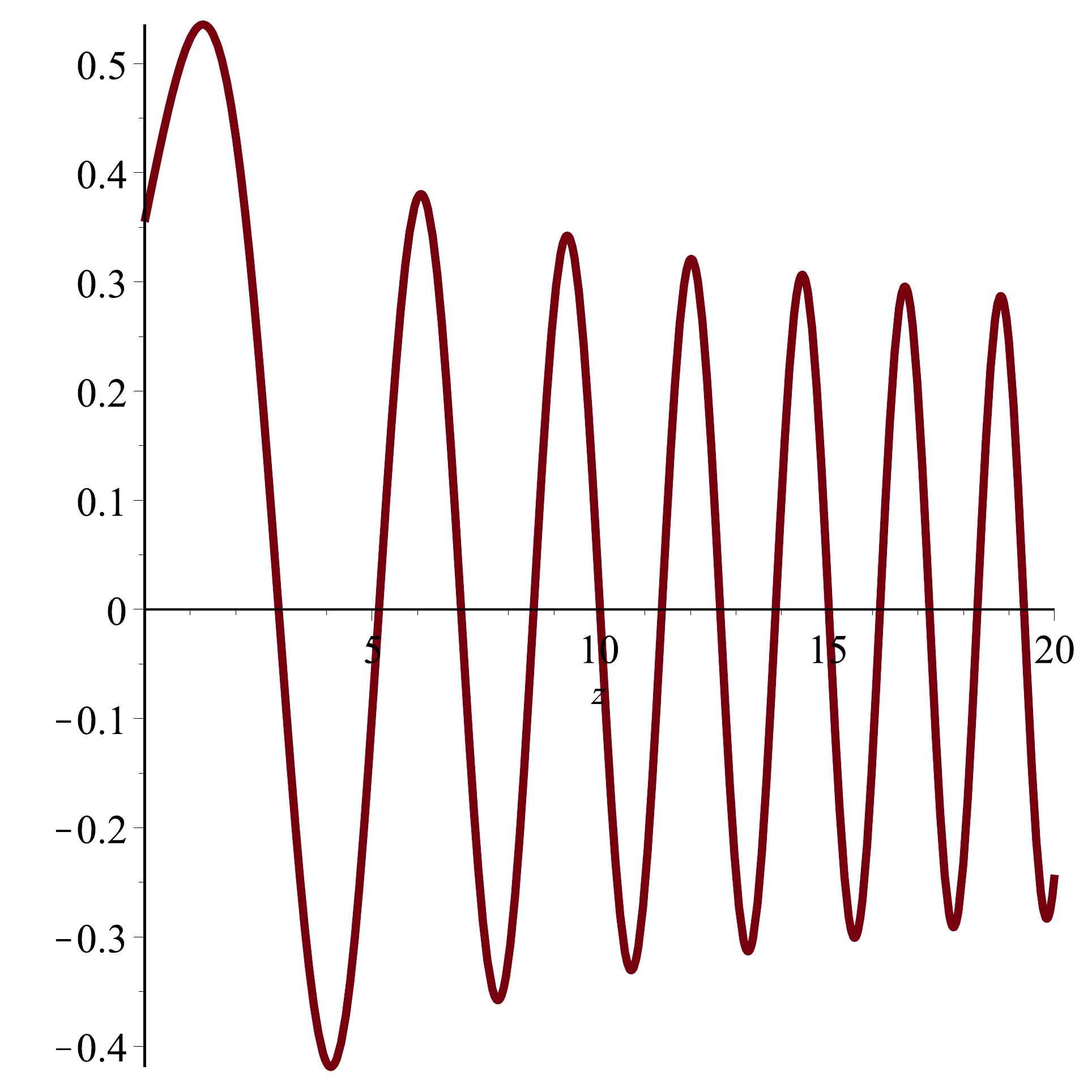}}\hspace*{1mm}
{\includegraphics[width=48mm]{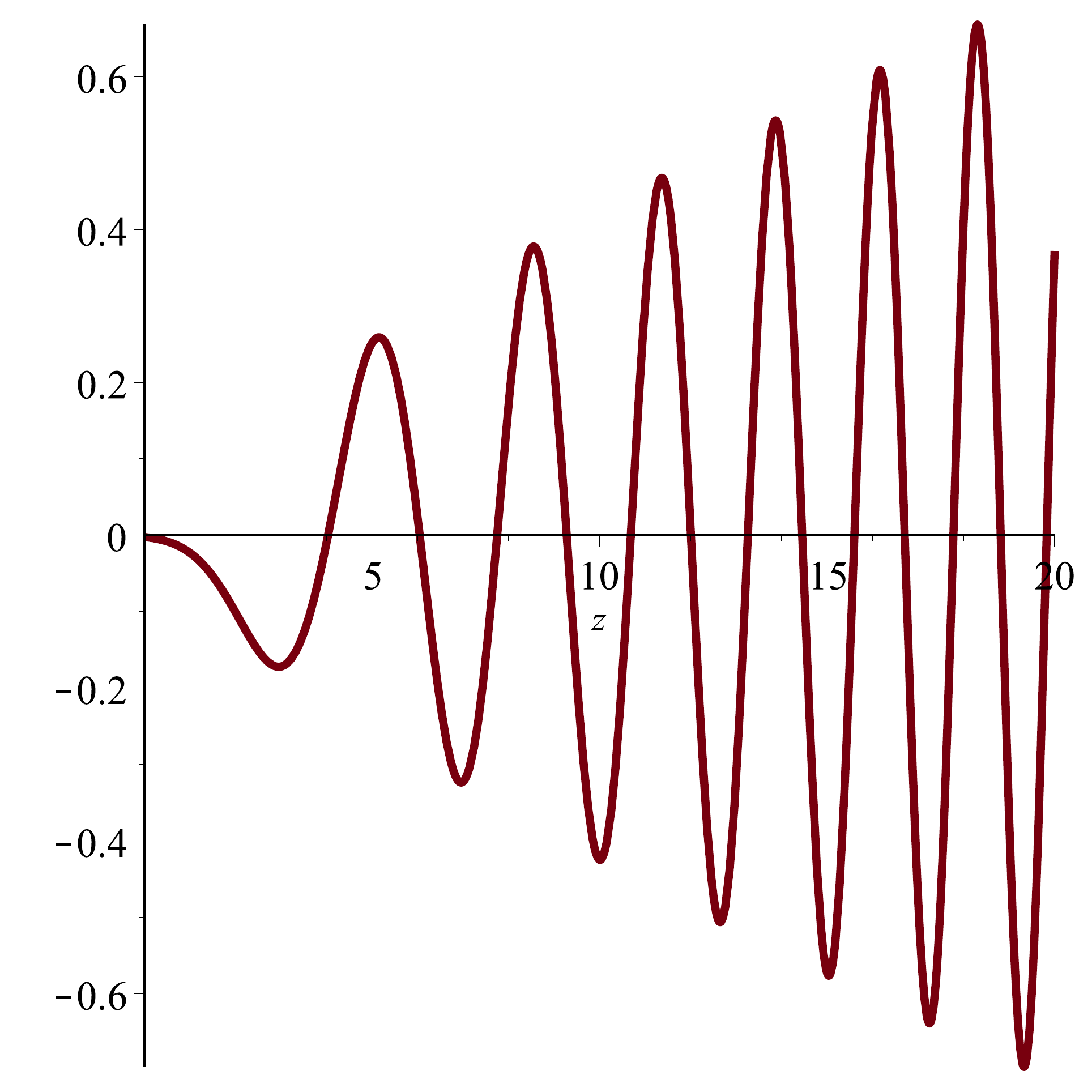}}\hspace*{1mm}
{\includegraphics[width=48mm]{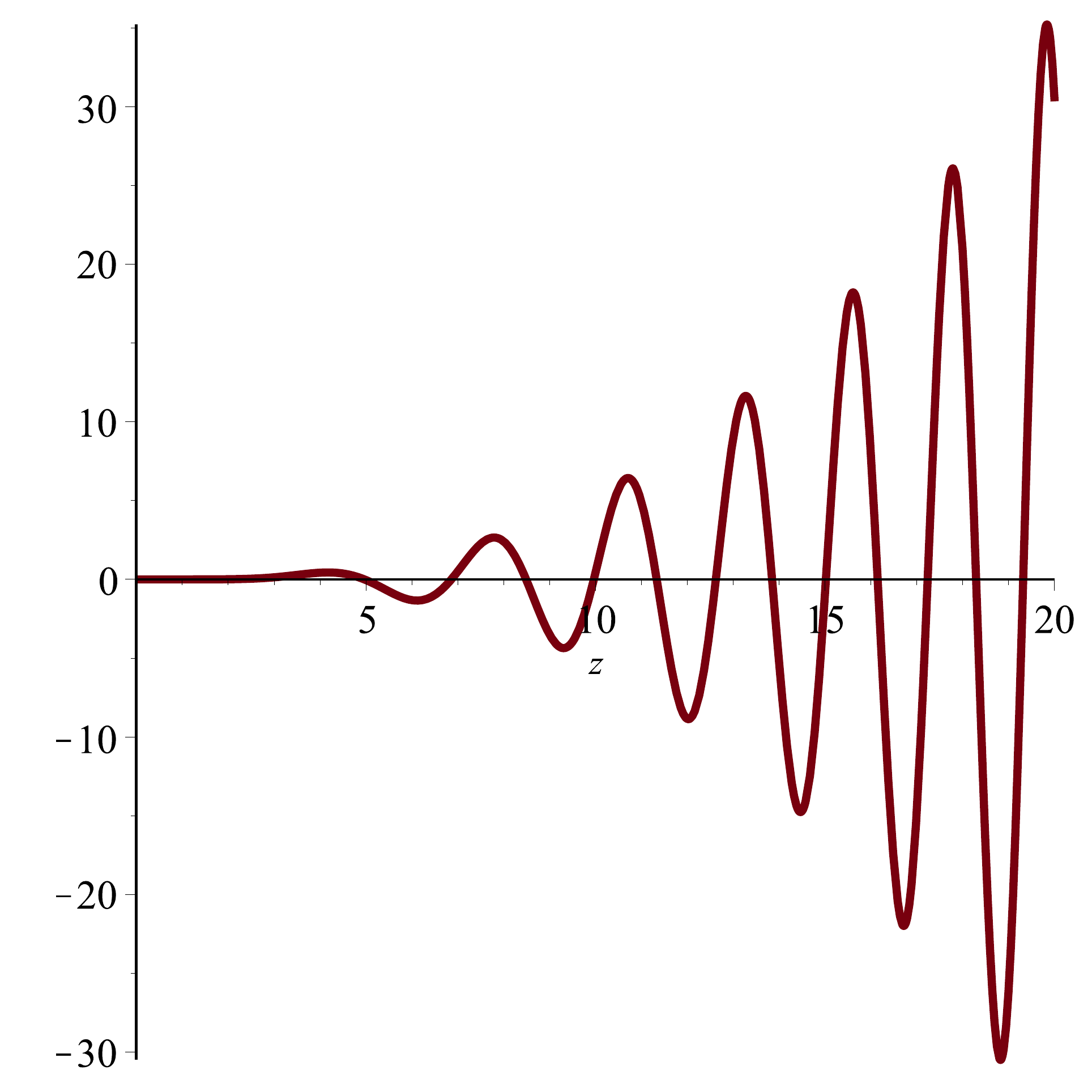}}
\caption{Plots of $\tau_1(z)$ (left), $\tau_3(z)$ (centre), $\tau_5(z)$ (right). In all cases $C_2=0$.} \label{fig_Airyodd}
\end{figure}
\begin{figure}[h!]\centering
{\includegraphics[width=48mm]{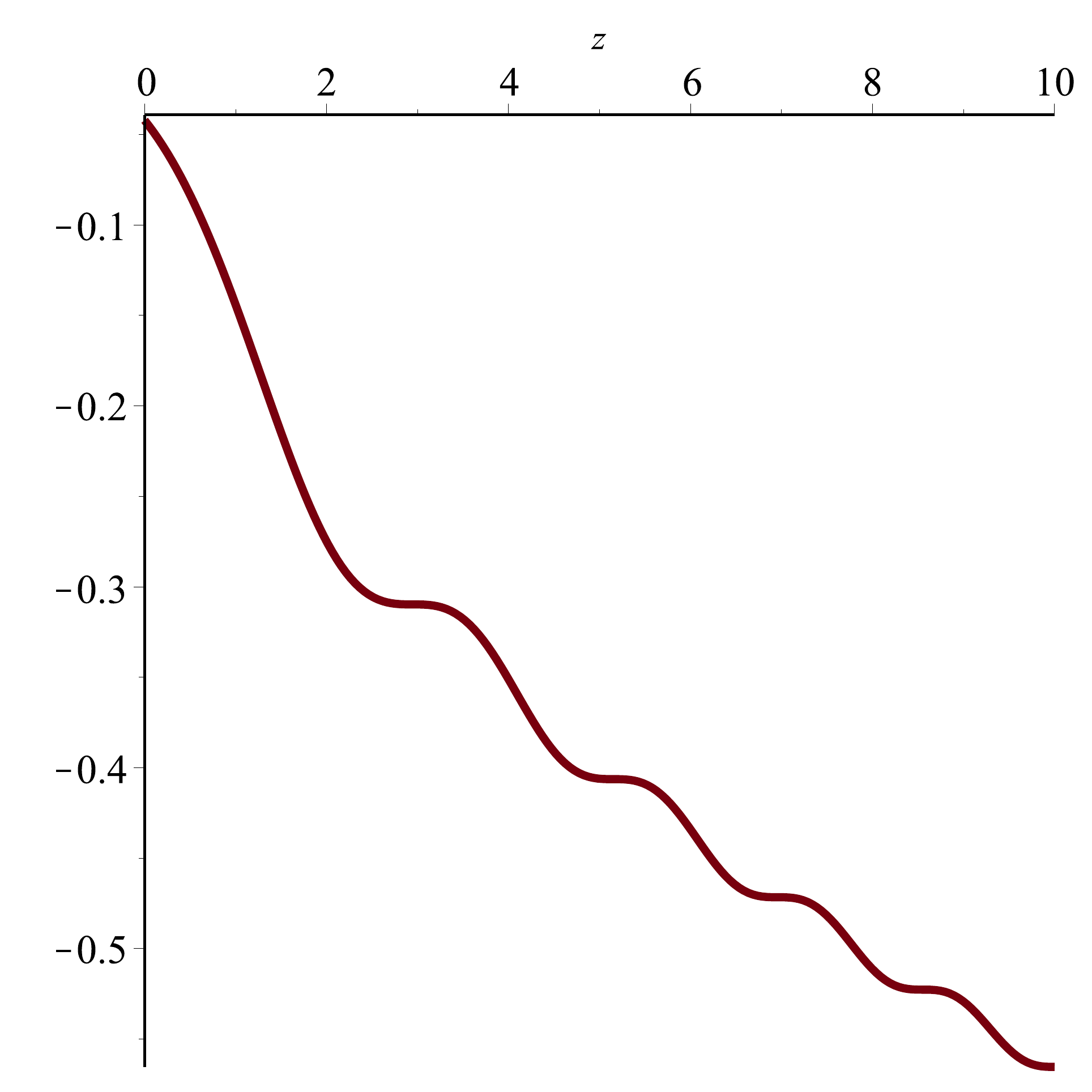}}\hspace*{1mm}
{\includegraphics[width=48mm]{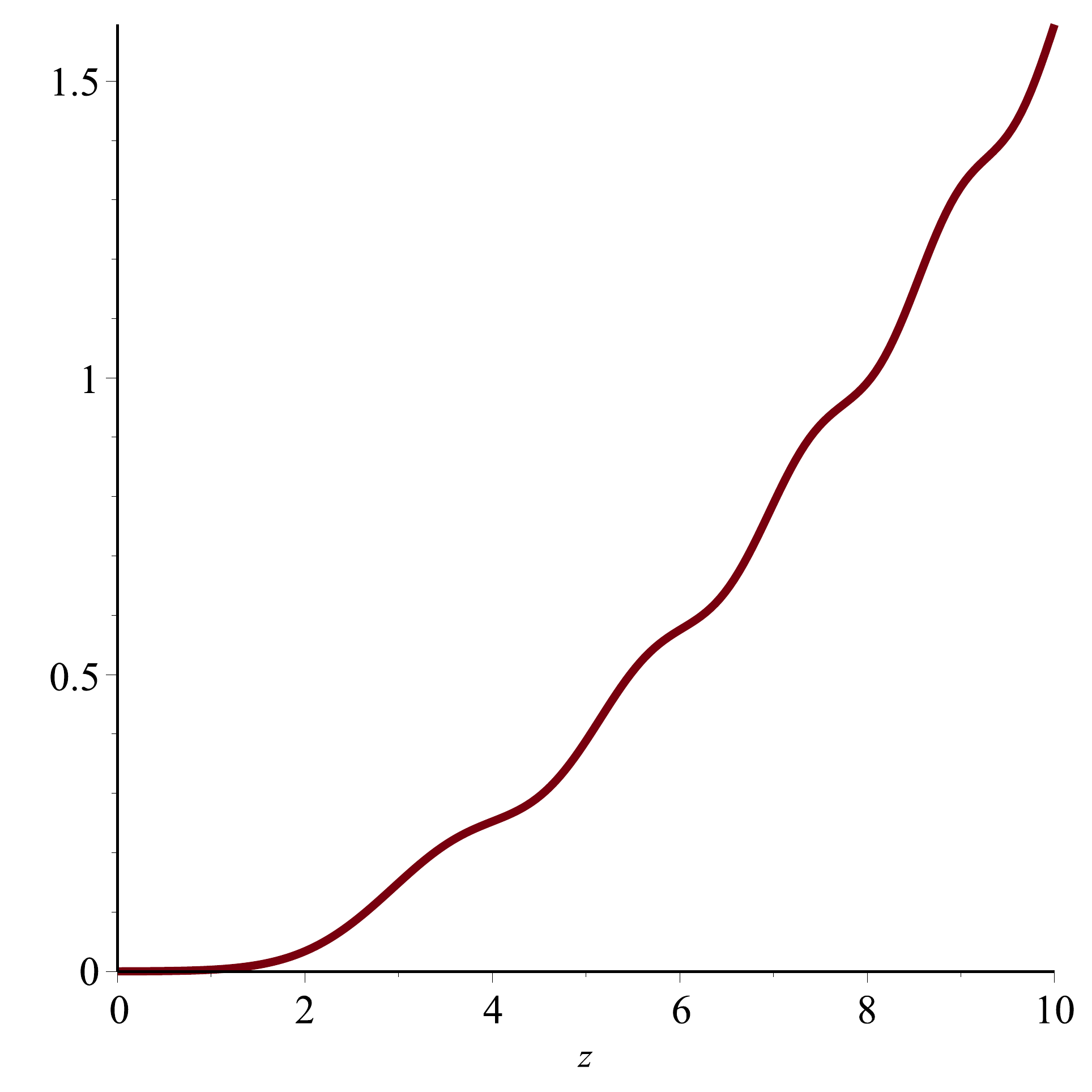}}\hspace*{1mm}
{\includegraphics[width=48mm]{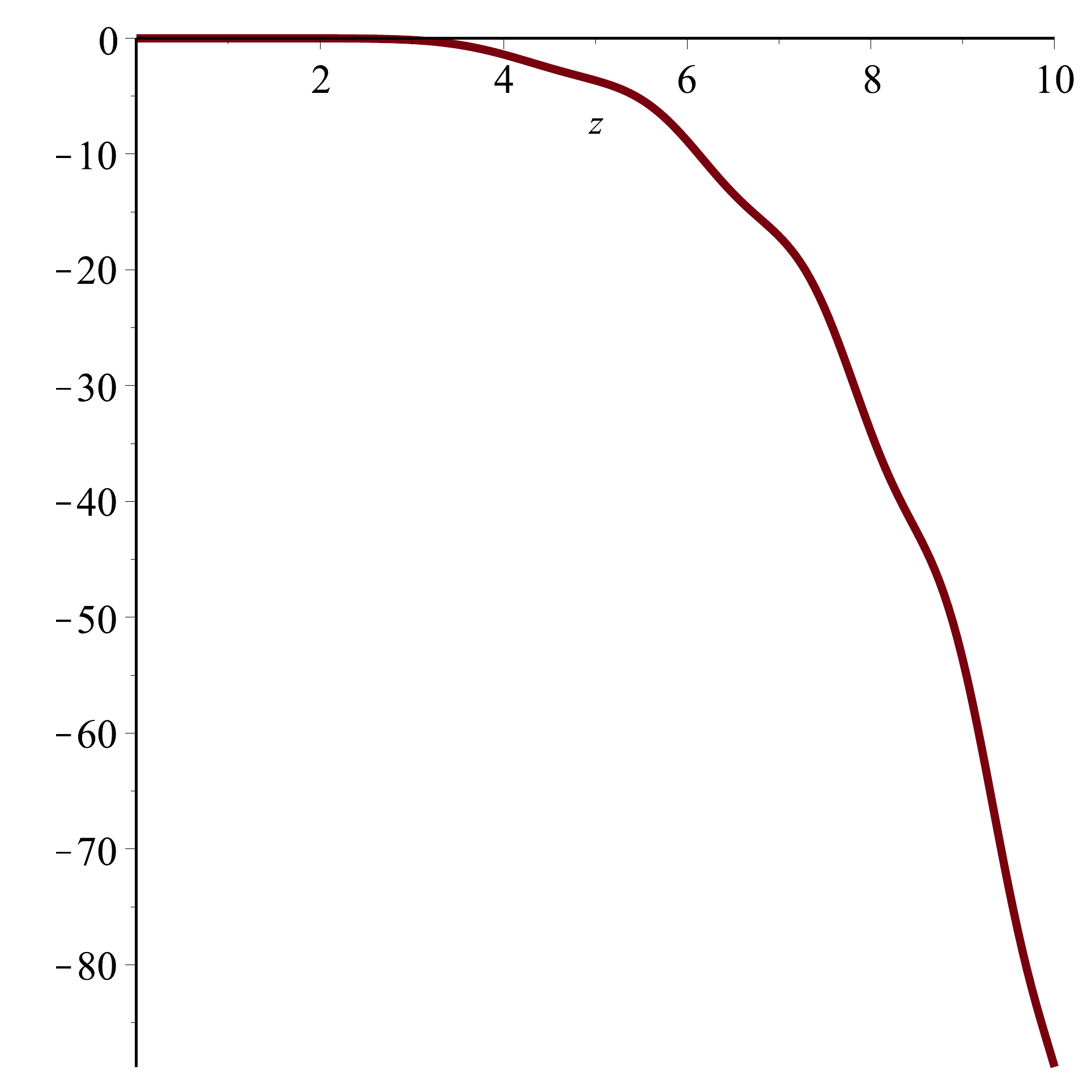}}
\caption{Plots of $\tau_2(z)$ (left), $\tau_4(z)$ (centre), $\tau_6(z)$ (right). In all cases $C_2=0$.}\label{fig_Airyeven}
\end{figure}
\end{center}

\begin{Remark}The general formula for the coefficients $b_{n,r}^{(0)}$ and $d_{n,r}^{(0)}$ is cumbersome, but it can be easily evaluated with symbolic software, and in many cases the sums reduce to just a few terms. Furthermore, several important simplifications can be made if $C_2=0$ (pure $\Ai$ function in the seed): in this case, only the term $p=\lfloor{n/2}\rfloor$ survives, $b_{2s-1,r}^{(0)}=d_{2s,r}^{(0)}=0$, and
\begin{gather*}
b_{2s,r}^{(0)}=(-1)^s C_1^{2s}\sum_{q=\max(0,2s-r)}^{2s-r} (-1)^q {r \choose 2p-q}{n-r \choose q},\\
d_{2s-1,r}^{(0)}=(-1)^s C_1^{2s-1}\sum_{q=\max(0,2s-r-1)}^{2s-r-1} (-1)^q {r \choose 2p+1-q}{n-r \choose q}.
\end{gather*}
\end{Remark}

As before, from this result about the tau function, we can deduce the asymptotic behavior of the Painlev\'e functions in the oscillatory regime.

\begin{Corollary}\label{Cor_Painleve_osc}For $n\geq 1$ and $s\geq 1$, the functions $q_n(z)$, $p_n(z)$ and $\sigma_n(z)$ in~\eqref{qpsigma} admit the following asymptotic expansions as $z\to\infty$, excluding arbitrarily small but fixed neighborhoods of the poles of the leading terms in the approximations:
\begin{enumerate}\itemsep=0pt
\item[$1.$] If $C_2\neq 0$, then
\begin{gather*}
\sigma_{2s}(z) = \frac{s^2}{2z}+\frac{s}{2z} \frac{ b_{2s,s-1}^{(0)}\sin(\psi_{2s,s-1}(z))-d_{2s,s-1}^{(0)}\cos(\psi_{2s,s-1}(z))}
{b_{2s,s}^{(0)}}+\mathcal{O}\big(z^{-5/2}\big),\\
\sigma_{2s-1}(z) =\sqrt{\frac{z}{2}} \frac {-b_{2s-1,s-1}^{(0)}\sin(\psi_{2s-1,s-1}(z))+d_{2s-1,s-1}^{(0)}\cos(\psi_{2s-1,s-1}(z))}
{b_{2s-1,s-1}^{(0)}\cos(\psi_{2s-1,s-1}(z))+d_{2s-1,s-1}^{(0)}\sin(\psi_{2s-1,s-1}(z))}
+\mathcal{O}\big(z^{-1}\big),
\end{gather*}
with coefficients given by \eqref{bd} and phase function \eqref{phin}. Also,
\begin{gather}
p_{2s}(z) =-\frac{\sqrt{2} s}{\sqrt{z}} \frac{b_{2s,s-1}^{(0)}\cos(\psi_{2s,s-1}(z))+d_{2s,s-1}^{(0)}\sin(\psi_{2s,s-1}(z))}{b_{2s,s}^{(0)}}
+\mathcal{O}\big(z^{-2}\big),\nonumber\\
p_{2s-1}(z) =z\left[1+\left(\frac{(b_{2s-1,s-1}^{(0)}\sin(\psi_{2s-1,s-1}(z))-d_{2s-1,s-1}^{(0)}\cos(\psi_{2s-1,s-1}(z))}
{b_{2s-1,s-1}^{(0)}\cos(\psi_{2s-1,s-1}(z))+d_{2s-1,s-1}^{(0)}\sin(\psi_{2s-1,s-1}(z))}\right)^2\right]\nonumber\\
\hphantom{p_{2s-1}(z) =}{} +\mathcal{O}\big(z^{-1/2}\big),\label{p_osc_C2}
\end{gather}
and
\begin{gather}
q_{n}(z) = (-1)^{n} \sqrt{\frac{z}{2}} \frac {-b_{2s-1,s-1}^{(0)}\sin(\psi_{2s-1,s-1}(z))+d_{2s-1,s-1}^{(0)}\cos(\psi_{2s-1,s-1}(z))}
{b_{2s-1,s-1}^{(0)}\cos(\psi_{2s-1,s-1}(z))+d_{2s-1,s-1}^{(0)}\sin(\psi_{2s-1,s-1}(z))}\nonumber\\
\hphantom{q_{n}(z) =}{} +\mathcal{O}\big(z^{-1}\big),\label{q_osc_C2}
\end{gather}
where $s=\left \lceil{\frac{n}{2}}\right\rceil$.

\item[$2.$] If $C_2=0$, then
\begin{gather*}
\sigma_{2s}(z)=\frac{s^2}{2z}-\frac{s}{2z}\sin(\psi_{2s,s-1}(z))+\mathcal{O}\big(z^{-5/2}\big),\nonumber\\
\sigma_{2s-1}(z)=\sqrt{\frac{z}{2}}\cot(\psi_{2s-1,s-1}(z))+\mathcal{O}\big(z^{-1}\big).
\end{gather*}

Also,
\begin{gather}
p_{2s}(z)=\frac{\sqrt{2} s}{\sqrt{z}}\cos(\psi_{2s,s-1}(z))+\mathcal{O}\big(z^{-2}\big),\nonumber\\
p_{2s-1}(z) =\frac{z}{\sin^2(\psi_{2s-1,s-1}(z))}+\mathcal{O}\big(z^{-1/2}\big),\label{p_osc_C20}
\end{gather}
and
\begin{gather}\label{q_osc_C20}
q_{n}(z)=(-1)^{n} \sqrt{\frac{z}{2}}\cot(\psi_{2s-1,s-1}(z))+\mathcal{O}\big(z^{-1}\big), \qquad s=\left \lceil{\frac{n}{2}}\right\rceil.
\end{gather}
\end{enumerate}
\end{Corollary}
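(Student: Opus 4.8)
The plan is to feed the asymptotic expansions for $\tau_n(z)$ from Theorem~\ref{Thm_osc} into the logarithmic-derivative formulas \eqref{qpsigma} and read off the leading behaviour of $\sigma_n$, $p_n$ and $q_n$. Since each $\tau_n$ is entire and the expansions of Theorem~\ref{Thm_osc} arise from steepest descent, they hold in a sectorial neighbourhood of $\mathbb{R}^+$ and may therefore be differentiated term by term with respect to $z$; granting this, the whole argument reduces to the algebra of logarithmic derivatives, where the recurring mechanism is that differentiating a phase $\psi_{n,r}$ costs a factor $\psi_{n,r}'(z)=(n-2r)z^{1/2}/\sqrt{2}$, while differentiating an algebraic prefactor only lowers the order.

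For $\sigma_n=\frac{\dd}{\dd z}\log\tau_n$ I would split into parities. When $n=2s$ the bracket in Theorem~\ref{Thm_osc} is dominated by the non-oscillatory term $\mathbf{B}_{2s,s}(z)\sim M_{2s,s}b_{2s,s}^{(0)}z^{3s^2/2}$ (its phase $\psi_{2s,s}$ vanishes), the $r=s-1$ pair being smaller by $z^{-3/2}$. Writing $\tau_{2s}=K_{2s}z^{-s^2}\mathbf{B}_{2s,s}(1+\epsilon)$ with $\epsilon=\mathcal{O}(z^{-3/2})$ oscillatory, the algebraic part contributes $\big({-}s^2+\tfrac{3}{2}s^2\big)/z=s^2/(2z)$, and $\frac{\dd}{\dd z}\log(1+\epsilon)\approx\epsilon'$ produces the $\mathcal{O}(z^{-1})$ oscillatory correction, whose constant is governed by $M_{2s,s-1}/M_{2s,s}$. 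Using $G(z+1)=\Gamma(z)G(z)$ this ratio collapses to $-2^{-5/2}s$, and together with $\psi_{2s,s-1}'=\sqrt{2z}$ it reproduces exactly the coefficient $s/(2z)$. When $n=2s-1$ there is no non-oscillatory term, the dominant $r=s-1$ pair giving $\log\tau_{2s-1}=(\text{algebraic})+\log\big(b_{2s-1,s-1}^{(0)}\cos\psi_{2s-1,s-1}+d_{2s-1,s-1}^{(0)}\sin\psi_{2s-1,s-1}\big)+\cdots$; since $\psi_{2s-1,s-1}'=\sqrt{z/2}$ grows, differentiating the logarithm of the trigonometric combination yields the leading $\sqrt{z/2}$ term while the algebraic part only enters at $\mathcal{O}(z^{-1})$.

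The functions $p_n=-2\sigma_n'$ and $q_n=\sigma_{n-1}-\sigma_n$ then follow. In the odd case the decisive simplification is the identity $g'(\psi)=-(1+g^2)$ for the combination $g=(-b\sin\psi+d\cos\psi)/(b\cos\psi+d\sin\psi)$, which, with $\psi'=\sqrt{z/2}$, turns $\sigma_{2s-1}\approx\sqrt{z/2}\,g$ into $p_{2s-1}=z(1+g^2)+\mathcal{O}(z^{-1/2})$; the even case instead differentiates the $\mathcal{O}(z^{-1})$ oscillatory part of $\sigma_{2s}$, the growing phase promoting it to the stated $\mathcal{O}(z^{-1/2})$ size of $p_{2s}$. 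For $q_n$ I would note that of the two pieces $\sigma_{n-1},\sigma_n$ exactly one has odd index and is $\mathcal{O}(z^{1/2})$, dominating the even-index one, which is $\mathcal{O}(z^{-1})$; this is why a single formula with $s=\lceil n/2\rceil$ and overall sign $(-1)^n$ serves both parities. The $C_2=0$ statements come from inserting the reduced coefficients of the Remark, in particular $b_{2s,r}^{(0)}=(-1)^{s+r}C_1^{2s}$ and $b_{2s-1,r}^{(0)}=0$, giving $b_{2s,s-1}^{(0)}/b_{2s,s}^{(0)}=-1$ and collapsing the trigonometric ratios to $\cot\psi$, $\sin\psi$, $\cos\psi$ and $\sin^{-2}\psi$.

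The main obstacle is not this leading-order bookkeeping but the uniform control of the remainders and the poles. Differentiation must be shown to convert the $\mathcal{O}(z^{-3/2})$ relative corrections in the amplitude series $\mathbf{B}_{n,r},\mathbf{D}_{n,r}$ into precisely the advertised orders $\mathcal{O}(z^{-5/2})$, $\mathcal{O}(z^{-2})$, $\mathcal{O}(z^{-1/2})$ and $\mathcal{O}(z^{-1})$, which forces one to keep, in each logarithmic expansion, both the next term of the amplitude series and the quadratic term of $\log(1+\epsilon)$. More delicate are the trigonometric denominators $b\cos\psi+d\sin\psi$ appearing in $\sigma_{2s-1}$, $p_{2s-1}$ and $q_n$: these vanish near the zeros of $\tau_n$, i.e.\ the poles of the Painlev\'e functions, so the approximations are only meaningful after excising arbitrarily small but fixed neighbourhoods of those points, and one must verify that the error bounds stay uniform on the complement.
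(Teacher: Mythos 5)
Your proposal is correct and follows essentially the same route as the paper: substitute the expansions of Theorem~\ref{Thm_osc} into $\sigma_n=\tau_n'/\tau_n$, isolate the dominant $r=s$ (even) and $r=s-1$ (odd) contributions, obtain $p_n=-2\sigma_n'$ and $q_n=\sigma_{n-1}-\sigma_n$ by differentiation and subtraction, and specialise the coefficients when $C_2=0$. Your explicit checks (the ratio $M_{2s,s-1}/M_{2s,s}=-2^{-5/2}s$ via $G(z+1)=\Gamma(z)G(z)$, the identity $g'=-(1+g^2)$, and the parity argument for the single $q_n$ formula) all verify, and in fact make explicit several steps the paper leaves as ``straightforward manipulations''.
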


We note that this is in accordance with the results in \cite[Theorems~5,~8 and~9]{Clarkson_Airy}, but it also extends the asymptotic results to the complex plane, and it includes the case of $n$ odd in the oscillatory regime. Furthermore, it provides more precise estimates of the remainder terms.

It is worth mentioning that in the reference \cite{IKO_P34}, Kuijlaars, Its and \"{O}stensson consider the asymptotics (for real $z$) of a one parameter family of solutions of $\PXXXIV$, depending on a parameter that is related to $\alpha$ in \eqref{P34}. This family is relevant in the analysis of critical edge behavior in unitary random matrix ensembles \cite{IKO_RMT}, and it includes the Airy solutions as a particular case. The results in \eqref{p_osc_C20} agree with Theorem~1.2 in~\cite{IKO_P34}, for the asymptotic behavior of the tronqu\'ee solutions of the $\PXXXIV$ equation\footnote{In the notation of~\cite{IKO_P34}, we have $b=s_2=(-1)^n$, in terms of the standard Stokes multipliers for $\PII$. Since the restriction $b>0$ applies in the steepest descent analysis of the Riemann--Hilbert problem, we only recover the case of even $n$ for Airy solutions, with $b=1$ and $\beta_0=0$. We also note that the parameter $\alpha$ in~\cite{IKO_P34} corresponds to $-\alpha/2-1/4=-n/2$ in our notation, and a change of variables is needed as well in the solutions of $\PXXXIV$.}.

\begin{Remark}In order to give the leading asymptotic behavior it is enough to keep a few terms in the previous expansion, namely $r=0$ in the non-oscillatory regime, and $r=s$ (if $n=2s$ is even), and $r=s-1$ (if $n=2s-1$ is odd) in the oscillatory regime. We have opted to present the full expansion because in order to examine the asymptotic behavior of solutions of the Painlev\'e equations $\sigma_n(z)$, $p_n(z)$ and $q_n(z)$, in particular in the oscillatory regime, higher order terms are needed. Also, these extra terms give exponential contributions that are important in the Stokes phenomenon for the $\Ai$ solution, we refer the reader to the discussion in Appendix~\ref{ApA}.
\end{Remark}

\section{Proof of Theorem \ref{Thm_nonosc}}\label{proof_nonosc}

We recall the classical integral representations of the Airy functions
\begin{gather}
\Ai(z)=\frac{1}{2\pi\ii}\int_{\infty \ee^{-\pi\ii/3}}^{\infty \ee^{\pi\ii/3}}\exp\big(\tfrac{1}{3}t^{3}-zt\big)\mathrm{d}t,\nonumber\\
\Bi(z) =\frac{1}{2\pi} \left(\int_{-\infty}^{\infty \ee^{-\pi\ii/3}}+\int_{-\infty}^{\infty \ee^{\pi\ii/3}}\right)\exp\big(\tfrac{1}{3}t^{3}-zt\big)\mathrm{d}t,\label{intAiBi}
\end{gather}
for $z\in\mathbb{C}$, see \cite[formulas (9.5.4) and (9.5.5)]{DLMF}. Bearing in mind the form of the seed function~\eqref{seed_general}, we take the weight function
\begin{gather}\label{fullw}
w(t,z)=\exp\big(\tfrac{1}{3}t^{3}+2^{-1/3}zt\big),
\end{gather}
and for $m\geq 0$ we define the moments
\begin{gather*}
\mu^{\Ai}_{m}(z) =\frac{C_1}{2\pi\ii} \int_{\infty \ee^{-\pi\ii/3}}^{\infty \ee^{\pi\ii/3}} t^m w(t,z)\dd t, \\
\mu^{\Bi}_{m}(z) =\frac{C_2}{2\pi} \left(\int_{-\infty}^{\infty \ee^{-\pi\ii/3}}+\int_{-\infty}^{\infty \ee^{\pi\ii/3}}\right) t^m w(t,z)\dd t.
\end{gather*}

Then, we have $\varphi(z)=\mu^{\Ai}_0(z)+\mu^{\Bi}_0(z)$, cf.~\eqref{seed_general}, and as a direct consequence,
\begin{gather*}
\frac{\dd^k}{\dd z^k} \varphi(z)=2^{-\frac{k}{3}}\big[\mu^{\Ai}_k(z)+\mu^{\Bi}_k(z)\big], \qquad k\geq 0.
\end{gather*}

The Wronskian determinant constructed from the seed function and the Hankel determinant for the weight function \eqref{fullw} are therefore related as follows:
\begin{gather*}
\tau_n(z)=\det\left(\frac{\dd^{j+k}}{\dd z^{j+k}}\varphi(z)\right)_{j,k=0}^{n-1}=
2^{-\frac{n(n-1)}{3}}	\det\big(\mu^{\Ai}_{j+k}(z)+\mu^{\Bi}_{j+k}(z)\big)_{j,k=0}^{n-1}.
\end{gather*}

Consider now that we have a total of $r$ Airy $\Ai$ integrals in the determinant, $0\leq r \leq n$, and consequently~$n-r$
Airy $\Bi$ integrals. Then
\begin{gather}\label{taunDnr}
\tau_n(z)= 	\frac{2^{-\frac{n(n-1)}{3}}}{(2\pi)^n} 	\sum_{r=0}^n \frac{C_1^r C_2^{n-r}}{\ii^r} D_{n,r}(z),
\end{gather}
where the Hankel determinants $D_{n,r}(z)$ can be written, following the standard theory \cite[Corollary 2.1.3]{Ismail}, as $n$-fold integrals:
\begin{gather}\label{Dnr}
D_{n,r}(z)=\frac{1}{n!}\int\cdots\int \Delta_n(\mathbf{t})^2 \prod_{k=1}^{n} w(t_k,z)\dd t_k, \qquad \Delta_n(\mathbf{t})=\prod_{1\leq j<k\leq n} (t_k-t_j),
\end{gather}
where $r$ integrals are taken along the path corresponding to $\Ai$ and $n-r$ along the one for $\Bi$, and we denote $\mathbf{t}=(t_1,t_2,\ldots,t_n)$. Note that the integrand is symmetric in the $n$ variables (the Vandermonde determinant may change sign when permuting variables, but it appears squared), therefore we can suppose, without loss of generality, that the first $r$ integrals correspond to $\Ai$ and the last $n-r$ integrals to~$\Bi$, and then sum over the possible permutations.

Using the scaled variables $\mathbf{t}=2^{-1/6}\sqrt{\rho} \mathbf{u}$, where
\begin{gather*}
z=-\rho\ee^{\ii\alpha}, \qquad \rho\geq 0, \qquad |\alpha|<\frac{\pi}{3},
\end{gather*}
cf.~\cite[Section 7.3]{BH_asymp}, we obtain
\begin{gather*}
D_{n,r}(z)=\frac{\left(2^{-1/6} \rho^{1/2}\right)^{n^2}}{n!}{n \choose r}\int_{\Gamma_{\alpha}}\cdots\int_{\Gamma_{\alpha}} \Delta_n(\mathbf{u})^2 \exp\left(\frac{\rho^{3/2}}{\sqrt{2}}\phi(\mathbf{u})\right)\prod_{k=1}^{n} \dd u_k,
\end{gather*}
where the phase function is
\begin{gather*}
\phi(\mathbf{u}) = \sum_{k=1}^n \left[\frac{1}{3}u_k^{3}-\ee^{\ii\alpha}u_k\right],
\end{gather*}
and $\Gamma_{\alpha}$ is any smooth infinite path that joins the sectors $\infty \ee^{-\pi\ii/3}$ and $\infty\ee^{\pi\ii/3}$ (for the $\Ai$ case) and the sectors $-\infty $ and $\infty\ee^{\pm\pi\ii/3}$ (for the $\Bi$ case).

Clearly, the gradient and Hessian of this function are
\begin{gather*}
\nabla \phi(\mathbf{u}) = \big(u_1^2-\ee^{\ii\alpha}, \ldots, u_n^2-\ee^{\ii\alpha}\big), \qquad
H \phi(\mathbf{u}) = 2\operatorname{diag}(u_k)_{k=1,\ldots,n}.
\end{gather*}

The stationary points
\begin{gather*}
\mathbf{u}^* = (u_{\pm},\ldots,u_{\pm}), \qquad u_{\pm}=\pm\ee^{\ii\alpha/2}.
\end{gather*}
(with any combination of signs) are non-degenerate, since $\det H \phi(\mathbf{u}^*)\neq 0$.

For $|\alpha|<\frac{\pi}{3}$, the main contribution to each $\Bi$ integral is given by the stationary point $u_{k-}$, since $\textrm{Re}(\phi(u_{k-}))>0$ and $\textrm{Re}(\phi(u_{k+}))<0$, and it will appear doubled because of the two paths joining $-\infty$ and $\infty \ee^{\pm\pi\ii/3}$ in \eqref{intAiBi}; for the $\Ai$ integrals, the relevant stationary point is $u_{k+}$, since path deformation through $u_{k-}$ would change the asymptotic behavior (from exponentially decreasing to exponentially increasing). Therefore, for the asymptotic analysis we need to consider stationary points of the form
\begin{gather}\label{statpoint}
\mathbf{u}^{(r)}= (\underbrace{u_+,\ldots,u_+}_{r\, \text{times}},\underbrace{u_-,\ldots,u_-}_{n-r\, \text{times}}).
\end{gather}

We add and subtract the value of the phase function at this point, to obtain
\begin{gather*}
D_{n,r}(z) = \frac{\left(2^{-1/6} \rho^{1/2}\right)^{n^2}}{r!(n-r)!}
\exp\left(\frac{\rho^{3/2}}{\sqrt{2}}\phi(\mathbf{u}^{(r)})\right)\\
\hphantom{D_{n,r}(z) =}{} \times \int_{\Gamma_{\alpha}}\cdots\int_{\Gamma_{\alpha}} \Delta_n(\mathbf{u})^2
\exp\left(\frac{\rho^{3/2}}{\sqrt{2}}\big[\phi(\mathbf{u})-\phi\big(\mathbf{u}^{(r)}\big)\big]\right)\prod_{k=1}^{n} \dd u_k.
\end{gather*}

By analyticity of the integrand in all the variables~$u_k$, the precise structure of these global paths is not relevant for the analysis, as long as they connect the correct sectors in the complex plane, since we can deform the contours sequentially in the different variables. We will use this freedom to integrate along the paths of steepest descent through the points~$\mathbf{u}^{(r)}$. These paths, denoted here by~$\Gamma_{\alpha}$, are implicitly given by the equation
\begin{gather*}
\operatorname{Im} \phi(\mathbf{u})=\operatorname{Im} \phi\big(\mathbf{u}^{(r)}\big),
\end{gather*}
and then it follows that the function $\phi(\mathbf{u})-\phi(\mathbf{u}^{(r)})$ is real-valued on $\Gamma_{\alpha}$. The paths of steepest descent are difficult to describe globally for general values of~$\alpha$, see the discussion in \cite[Sec\-tion~7.3]{BH_asymp}, however, for asymptotic approximations we only need their existence around the stationary points.

We isolate the stationary points by fixing $\delta>0$ and two discs $D\big(\mathbf{u}^{(r)},\delta\big)$ of radius $\delta$; then we define
\begin{gather*}
\Gamma_{\alpha,\delta}=\Gamma_{\alpha}\cap D\big(\mathbf{u}^{(r)},\delta\big),
\end{gather*}
that is, small portions of the steepest descent path around the stationary points. Then as $z\to\infty$ in the sector that we are considering, we have
\begin{gather}\label{DnIn_exp}
D_{n,r}(z)=\exp\left(\frac{\sqrt{2}}{3}(n-2r)(-z)^{3/2}\right) [I_{n,r}(z)+E_{n,r}(z) ],
\end{gather}
where
\begin{gather*}
I_{n,r}(z) = \frac{\left(2^{-1/6}\rho^{1/2}\right)^{n^2}}{{r!(n-r)!}}
\int_{\Gamma_{\alpha,\delta}} \Delta_n(\mathbf{u})^2 \exp\left(\frac{\rho^{3/2}}{\sqrt{2}}\big[\phi(\mathbf{u})-\phi\big(\mathbf{u}^{(r)}\big)\big]\right)\prod_{k=1}^n \dd u_k,
\end{gather*}
and $E_{n,r}(z)$ is the remainder.

It is important to note that we need to choose $\delta>0$ in such a way that the remainder $E_{n,r}(z)$ is exponentially small with respect to \emph{all} the exponential terms that are present in~\eqref{DnIn_exp}, that is, $E_{n,r}(z)=o\big(\exp\big({-}\frac{\sqrt{2}n}{3}(-z)^{3/2}\big)\big)$ as $|z|\to\infty$. Computing such a $\delta$ explicitly is complicated in general, but it is clear that for large enough $|z|$, such a choice is always possible, given that the phase function is real (and decaying) along the path of steepest descent.

We apply a final change of variables to transform the exponential terms in the integral into Gaussians, which is a particular case of Morse lemma in the literature \cite[Chapter~1, Section~2]{fedoryuk1989asymptotic}: for each $1\leq k\leq n$, we define
\begin{gather*}
\phi(\mathbf{u})-\phi\big(\mathbf{u}^{(r)}\big)=-\sum_{k=1}^n v^2_k.
\end{gather*}

This change of variable can be written by components, and as $v_k\to 0$ we have
\begin{gather*}
u_{k}=u_++\ii\ee^{-\frac{\alpha\ii}{4}}v_k+\frac{\ee^{-\ii\alpha}}{6}v_k^2+\mathcal{O}\big(v_k^3\big),\qquad
u_{k}=u_-+\ee^{-\frac{\alpha\ii}{4}}v_k+\frac{\ee^{-\ii\alpha}}{6}v_k^2+\mathcal{O}\big(v_k^3\big).
\end{gather*}

This maps the contour $\Gamma_{\alpha,\delta}$ onto $[-\varepsilon,\varepsilon]^n$ on the real axis, for some $\varepsilon>0$. Then, with an exponentially small error again, we can extend the integrals to the whole real axis, using a~standard estimate: for $\varepsilon>1$, $C=\frac{\rho^{3/2}}{\sqrt{2}}>0$, and $f$ analytic, even and with at most polynomial growth, we have
\begin{gather*}
\int_{-\infty}^{\infty} f(v)\ee^{-Cv^2}\dd v- \int_{-\varepsilon}^{\varepsilon} f(v)\ee^{-Cv^2}\dd v
=2\int_{\varepsilon}^{\infty} f(v)\ee^{-Cv^2}\dd v \leq 2\int_{\varepsilon}^{\infty} f(v)\ee^{-Cv}\dd v.
\end{gather*}

The last integral can be written in terms of incomplete Gamma functions (see \cite[Sections~8.2 and~8.11]{DLMF} for definitions and asymptotics) and a similar argument can be used in~$n$ variables, taking the Vandermonde determinant as the function~$f$. If we make~$\varepsilon$ large enough, we have a~remainder that is exponentially small with respect to all the terms in~\eqref{DnIn_exp}.

In order to study the Vandermonde determinant, we split it in three parts, separating those terms that combine two $u_+$ or two $u_-$ values and then a final one that mixes positive and negative stationary points:
\begin{gather*}
\Delta_n(\mathbf{u})^2=\prod_{1\leq j<k\leq r} (u_k-u_j)^2\prod_{r+1\leq j<k\leq n} (u_k-u_j)^2 \prod_{1\leq j\leq r,r+1\leq k\leq n} (u_k-u_j)^2.
\end{gather*}

This term, together with the differentials, becomes
\begin{gather*}
\Delta_n(\mathbf{u})^2 \prod_{k=1}^n \dd u_k=2^{2r(n-r)}\ee^{\frac{\pi\ii}{2}r^2-\frac{\alpha\ii}{4}(n^2+6r(n-r))}\\
\hphantom{\Delta_n(\mathbf{u})^2 \prod_{k=1}^n \dd u_k=}{} \times
\prod_{1\leq j<k\leq r} (v_k-v_j)^2 \prod_{r+1\leq j<k\leq n} (v_k-v_j)^2 \Xi(\mathbf{v})\prod_{k=1}^n \dd v_k,
\end{gather*}
where
\begin{gather*}
\Xi(\mathbf{v})=1+\sum_{k=1}^n a_kv_k+\sum_{j,k=1}^n b_{j,k}v_jv_k+\cdots, \qquad |v|\to 0,
\end{gather*}
for some coefficients $a_k$ and $b_{j,k}$ whose exact form is not relevant for the leading term in the asymptotic expansion. This leads to two decoupled Selberg integrals, and writing everything together we obtain
\begin{gather}
I_{n,r}(z)=\big(2^{-1/6}\rho^{1/2}\big)^{n^2}\exp\left(\frac{\sqrt{2}(n-2r)}{3}(-z)^{3/2}-\left(\frac{n^2}{4}+\frac{3r(n-r)}{2}\right)\alpha\ii
+\frac{r^2\pi\ii}{2}\right) \nonumber\\
\hphantom{I_{n,r}(z)=}{} \times 2^{(n-r)(2r+1)} \frac{S_r(z)S_{n-r}(z)}{r!(n-r)!}\big(1+\mathcal{O}\big(\rho^{-3/2}\big)\big),\label{Inr_final}
\end{gather}
where
\begin{gather}\label{Selberg}
S_d=\int_{\mathbb{R}^d} \Delta_d(v)^2 \prod_{k=1}^d \ee^{-C v_k^2}\dd v_k=\frac{(2\pi)^{d/2}}{(2C)^{d^2/2}}\prod_{k=1}^d k!=\frac{(2\pi)^{d/2}}{(2C)^{d^2/2}}G(d+2),
\end{gather}
for $d\geq 1$ and $\operatorname{Re} C>0$, in terms of the Barnes $G$ function, see \cite[Section~5.17]{DLMF}. Identifying $C=\rho^{3/2}/\sqrt{2}$ and $z=-\rho\, \ee^{\ii\alpha}$, we have
\begin{gather*}
I_{n,r}(z) =2^{-\frac{5n^2}{12}+\frac{n}{2}+(n-r)(\frac{5}{2}r+1)}\pi^{\frac{n}{2}}
(-z)^{-\frac{n^2}{4}-\frac{3}{2}r(n-r)} G(r+1)G(n-r+1)\\
\hphantom{I_{n,r}(z) =}{} \times \exp\left(\frac{\sqrt{2}(n-2r)}{3}(-z)^{3/2}+\frac{r^2\pi\ii}{2}\right)
\big(1+\mathcal{O}\big((-z)^{-3/2}\big)\big).
\end{gather*}

Combining the powers of $2$ and $\pi$ with the prefactor in \eqref{taunDnr} and summing over~$r$, we arrive at the leading term in~\eqref{taun_nonosc_general}.

This calculation gives very relevant information about the remainder as well: the order of the error in~\eqref{Inr_final} comes from the fact that any term beyond the leading one in the differentials or in the Vandermonde will produce linear terms in the components~$v_k$ (which integrate to~$0$ against the Gaussian because of symmetry) and then quadratic terms, which, using the formula
\begin{gather*}
\int_{\mathbb{R}} v_k^2 \ee^{-Cv_k^2}\dd v_k=\frac{1}{2C}\int_{\mathbb{R}} \ee^{-Cv_k^2}\dd v_k, \qquad \operatorname{Re} C>0,
\end{gather*}
will contribute to an error of order $\mathcal{O}\big(\rho^{-3/2}\big)$ as $\rho\to\infty$, since in our situation we have $C=\rho^{3/2}/\sqrt{2}$. This is true for higher order terms as well, so each exponential level in \eqref{taun_nonosc_general} contains in fact a~full asymptotic expansion in inverse powers of $(-z)^{3/2}$, which appears in the coefficients~$\mathbf{A}_{n,r}(z)$ in~\eqref{taun_nonosc_general}.

In the case $C_2=0$ we have no $\Bi$ integrals, so we take $r=n$, and all integrals will involve the stationary point $u_+$ only. The calculation is analogous and the leading term follows from this substitution into~\eqref{taun_nonosc_general}, which gives~\eqref{taun_nonosc_C20}; furthermore, following the standard asymptotic theory of the Airy~$\Ai$ function, see for example \cite[Section~4.7]{Miller_asymp}, the expansion holds in the larger sector $|\arg(-z)|<\pi$.

\section{Proof of Theorem \ref{Thm_osc}}

In the oscillatory regime, the asymptotic analysis is similar, but slightly more complicated because both~$\Ai$ and~$\Bi$ integrals need to be evaluated both at $u_+$ and at $u_-$, which for $z>0$ have exponential contributions with the same real part. We will highlight the main differences with respect to the non-oscillatory case.

We start the calculation with \eqref{taunDnr} and \eqref{Dnr} as before. Assuming that $z>0$, we make the change of variables $\mathbf{t}=2^{-1/6}z^{1/2} \mathbf{u}$, and we have
\begin{gather}\label{Dnr_osc}
D_{n,r}(z) = \frac{\left(2^{-1/6} z^{1/2}\right)^{n^2}}{n!}{n \choose r}\int_{\Gamma_{\alpha}}\cdots\int_{\Gamma_{\alpha}} \Delta_n(\mathbf{u})^2 \exp\left(\frac{\rho^{3/2}}{\sqrt{2}}\phi(\mathbf{u})\right)\prod_{k=1}^{n} \dd u_k,
\end{gather}
where the phase function is now
\begin{gather*}
\phi(\mathbf{u}) =\sum_{k=1}^n \left[\frac{1}{3}u_k^{3}+u_k\right],
\end{gather*}
with stationary points $\mathbf{u}^{*}=(\pm\ii, \ldots,\pm\ii)$. By symmetry, we can consider without loss of generality the stationary point $\mathbf{u}^{(r)}$ in \eqref{statpoint} again, and sum in $r$ over the ${n \choose r}$ possible permutations. Adding and subtracting the phase function at \eqref{statpoint}, we obtain
\begin{gather*}
D_{n,r}(z) =\frac{\left(2^{-1/6} z^{1/2}\right)^{n^2}}{n!}{n \choose r}\\
 \hphantom{D_{n,r}(z) =}{} \times \exp\left(\frac{\sqrt{2}\ii}{3}(n-2r)z^{3/2}\right)\int_{\Gamma^n} \Delta_n(\mathbf{u})^2 \exp\left(\frac{z^{3/2}}{\sqrt{2}}\left[\phi(\mathbf{u})-\phi(\mathbf{u}^{(r)})\right]\right)\prod_{k=1}^n \dd u_k.
\end{gather*}

As before, we isolate these points by fixing $\delta>0$ and two discs $D\big(\mathbf{u}^{(r)},\delta\big)$ of radius $\delta$ around the stationary points in each of the variables $u_k$. We take $\Gamma_{\delta}=\Gamma\cap D\big(\mathbf{u}^{r},\delta\big)$, where $\Gamma$ is the corresponding path of steepest descent. Then as $z\to\infty$ we have
\begin{gather*}
D_{n,r}(z)=\exp\left(\frac{\sqrt{2}\ii}{3}(n-2r)z^{3/2}\right) [ I_{n,r}(z)+E_{n,r}(z) ],
\end{gather*}
where
\begin{gather*}
I_{n,r}(z)=\frac{\left(2^{-1/6}z^{1/2}\right)^{n^2}}{r!(n-r)!} \int_{\Gamma_{\delta}} \Delta_n(\mathbf{u})^2 \exp\left(\frac{z^{3/2}}{\sqrt{2}}\big[\phi(\mathbf{u})-\phi\big(\mathbf{u}^{(r)}\big)\big]\right)\prod_{k=1}^n \dd u_k,
\end{gather*}
and $E_{n,r}(z)$ is the remainder.

In the analysis of the Vandermonde determinant, it is convenient to split the different cases depending on which stationary point is considered, using the index $r$, and not in terms of~$\Ai$ and~$\Bi$ functions. Note that in the non-oscillatory case, both ideas are equivalent, since each Airy function requires only one of the stationary points ($u_+$ for~$\Ai$ and $u_-$ for~$\Bi$). In the oscillatory regime, however, we have (independently of the parameter~$r$), $p$ integrals of type~$\Ai$ and $n-p$ integrals of type $\Bi$, with $0\leq p\leq n$, and any integral around the stationary point~$-\ii$ has different orientation depending if we are integrating along the~$\Ai$ or the~$\Bi$ contour: the contour for an integral along an $\Ai$ contour is oriented from right to left, and each one of them adds a~$-1$ factor. In order to quantify this, for any given~$p$, we need to count all possible configurations where we have $q$ integrals of $\Ai$ type in the last $n-r$ cases (where the point~$-\ii$ is taken into account) and therefore $p-q$ integrals of $\Ai$ type in the first~$r$ cases, for any value of~$q$ from~$0$ to~$p$. We define
\begin{gather*}
H_{n,r,p}=\sum_{q=\max(0,p-r)}^{\min(p,n-r)} {r \choose p-q}{n-r \choose q} (-1)^q,
\end{gather*}
where the limits of summation are set so that all binomial numbers are well defined.

Writing together all the contributions and summing over $p$, we have
\begin{gather*}
I_{n,r}(z)=(-4)^{r(n-r)}\exp\left[(n-2r)\ii\left(\frac{\sqrt{2}z^{3/2}}{3}+\frac{n\pi}{4}\right)\right]
\frac{S_{r}(z) S_{n-r}(z)}{r!(n-r)!} \sum_{p=0}^n H_{n,r,p}\\
\hphantom{I_{n,r}(z)=}{} \times \big(2^{-1/6}z^{1/2}\big)^{n^2} \big(1+\mathcal{O}\big(z^{-3/2}\big)\big),
\end{gather*}
in terms of Selberg integrals \eqref{Selberg} again. This leads to
\begin{gather*}
I_{n,r}(z) = 2^{-\frac{5n^2}{12}+\frac{n}{2}}\pi^{\frac{n}{2}}\, z^{-\frac{n^2}{4}-\frac{3}{2}r(n-r)} \exp\left[(n-2r)\ii\left(\frac{\sqrt{2}z^{3/2}}{3}+\frac{n\pi}{4}\right)\right] M_{n,r}
\sum_{p=0}^n H_{n,r,p}\\
\hphantom{I_{n,r}(z) =}{} \times \big(1+\mathcal{O}\big(z^{-3/2}\big)\big),
\end{gather*}
where the coefficient $M_{n,r}$ is given by \eqref{Mnr}. The powers of $2$ and $\pi$ can then be combined and simplified using~\eqref{taunDnr}, which relates $\tau_n(z)$ and $D_n(z)$, and \eqref{Dnr_osc}.

Finally, the asymptotic expansion can be written in terms of sines and cosines, instead of complex exponentials, noting that $H_{n,n-r,p}=(-1)^p H_{n,r,p}$, for $0\leq r\leq n$, and grouping terms depending on the parity of~$p$. The proof of this symmetry relation can be obtained by writing the different cases: if $p\leq n-r$ and $p-r\leq 0$, then
\begin{gather*}
H_{n,r,p}=\sum_{q=0}^p {r\choose p-q}{n-r\choose q}(-1)^q,
\end{gather*}
and
\begin{gather*}
H_{n,n-r,p} = \sum_{q=0}^p {n-r\choose p-q}{r\choose q}(-1)^q = \sum_{q=0}^p {n-r\choose q}{r\choose p-q}(-1)^{p-q}=(-1)^p H_{n,r,p},
\end{gather*}
where we have reversed the order inside the sum (i.e., $q\mapsto p-q$). If $p\leq n-r$ and $p-r\geq 0$, then
\begin{gather*}
H_{n,r,p}=\sum_{q=p-r}^p {r\choose p-q}{n-r\choose q}(-1)^q,
\end{gather*}
and
\begin{gather*}
H_{n,n-r,p} =\sum_{q=0}^r {n-r\choose p-q}{r\choose q}(-1)^q = \sum_{q=0}^r {n-r\choose p-r+q}{r\choose r-q}(-1)^{r-q}\\
\hphantom{H_{n,n-r,p}}{}= \sum_{s=p-r}^p {n-r\choose s}{r\choose p-s}(-1)^{p-s}=(-1)^p H_{n,r,p},
\end{gather*}
reversing the sum in the first step and shifting $q=s-p+r$ in the second one.

The case $p>n-r$ can be proved in a similar way.

\section{Proof of Corollaries~\ref{Cor_Painleve_nonosc} and~\ref{Cor_Painleve_osc}}
In the non-oscillatory regime, we can derive asymptotic expansions for the Painlev\'e functions $\sigma_n(z)$, $p_n(z)$ and $q_n(z)$ in \eqref{qpsigma} quite straightforwardly. We will use the asymptotic expansions for~$\tau_n(z)$ and~\eqref{qpsigma}, observing that differentiation is permitted since~$\tau_n(z)$ is an analytic function of~$z$ inside the relevant sectors, see \cite[Section~1.8]{Olver_asymp}. In the oscillatory regime, one would need to open a sector around the positive real axis; this calculation is similar to the oscillatory case, writing $z=\rho\ee^{\ii\alpha}$, $|\alpha|<\frac{\pi}{3}$ and considering both stationary points.

Suppose first that $C_2\neq 0$. Instead of working directly with the asymptotic expansion \eqref{taun_nonosc_general}, it is simpler to pick the leading term therein, corresponding to $r=0$, i.e.
\begin{gather*}
\tau_n(z)=K_n \mathbf{A}_{n,0}(z)\ee^{\frac{\sqrt{2}}{3}n(-z)^{\frac{3}{2}}}+\mathcal{O}\Big(\ee^{\frac{\sqrt{2}}{3}(n-2)(-z)^{\frac{3}{2}}}\Big)
\end{gather*}
as $|z|\to\infty$ in the sector $|\arg (-z)|<\frac{\pi}{3}$. Using this result, we can deduce the form of the asymptotic expansion for the function $\sigma_n(z)$, which is
\begin{gather}\label{asymp_sigma_nonosc}
\sigma_n(z)=\frac{\tau_n'(z)}{\tau_n(z)}=\sum_{k=0}^{K-1} s_{n,k}(-z)^{\tfrac{1}{2}-\tfrac{3k}{2}}+\mathcal{O}\Big((-z)^{\tfrac{1}{2}-\tfrac{3K}{2}}\Big), \qquad K\geq 1.
\end{gather}

Then, substituting this expansion into the differential equation \eqref{SII}, we can identify the coefficients $s_{n,k}$. Then, we can compute
\begin{gather*}
p_n(z)=-2\sigma'_n(z)=-2 \sum_{k=0}^{K-1} p_{n,k}(-z)^{-\tfrac{1}{2}-\tfrac{3k}{2}}+\mathcal{O}\left((-z)^{\tfrac{1}{2}-\tfrac{3K}{2}}\right),
\end{gather*}
where the coefficients $p_{n,k}$ follow easily from $s_{n,k}$. Finally, from \eqref{qpsigma} again, we have
\begin{gather*}
q_n(z)=\sigma_{n-1}(z)-\sigma_n(z)=\sum_{k=0}^{K-1} q_{n,k}(-z)^{-\tfrac{1}{2}-\tfrac{3k}{2}}+\mathcal{O}\Big((-z)^{\tfrac{1}{2}-\tfrac{3K}{2}}\Big),
\end{gather*}
with coefficients that follow from $s_{n,k}$ once again.

In the case $C_2=0$, we can use a similar argument, but with the form
\begin{gather*}
\tau_n(z)=K_n \mathbf{A}_{n,n}(z)\ee^{-\frac{\sqrt{2}}{3}n(-z)^{\frac{3}{2}}}.
\end{gather*}

The negative exponential term naturally leads to changes in the coefficients, but the form of the asymptotic expansion is the same.

In the oscillatory regime, calculations are more delicate, and the main difficulty is to establish a general pattern for the asymptotic expansion, as we did before in \eqref{asymp_sigma_nonosc}. The reason for this is that in Theorem \ref{Thm_osc} there is a clear leading term, but as we expand further, different trigonometric functions will be involved. These higher order terms can be computed using symbolic software, but for brevity we will give only the leading terms.

The leading terms in Theorem \ref{Thm_osc} clearly correspond to $r=s$ if $n=2s$ is even, and to $r=s-1$ if $n=2s-1$ is odd. For $s\geq 1$, we have
\begin{gather*}
\sigma_{2s}(z)=\frac{\tau_{2s}'(z)}{\tau_{2s}(z)} =
\frac{s^2}{2z}+\frac{s}{2z}\,\frac{ b_{2s,s-1}^{(0)}\sin(\psi_{2s,s-1}(z))-d_{2s,s-1}^{(0)}\cos(\psi_{2s,s-1}(z))}
{b_{2s,s}^{(0)}}+\mathcal{O}\big(z^{-5/2}\big),
\end{gather*}
with coefficients given by \eqref{bd} and phase function \eqref{phin}.

If $C_2=0$, this expression simplifies considerably, since only the term $p=s$ survives, and then $b_{2s,s}^{(0)}=-b_{2s,s-1}^{(0)}=C_1^{2s}$ and $d_{2s,s-1}^{(0)}=0$, so
\begin{gather*}
\sigma_{2s}(z) = \frac{s^2}{2z}-\frac{s}{2z}\sin(\psi_{2s,s-1}(z))+\mathcal{O}\big(z^{-5/2}\big).
\end{gather*}

This is in agreement with \cite[Theorem~9]{Clarkson_Airy}, correcting the phase function. In the odd case, we have
\begin{gather*}
\sigma_{2s-1}(z) =\sqrt{\frac{z}{2}} \frac
{-b_{2s-1,s-1}^{(0)}\sin(\psi_{2s-1,s-1}(z))+d_{2s-1,s-1}^{(0)}\cos(\psi_{2s-1,s-1}(z))}
{b_{2s-1,s-1}^{(0)}\cos(\psi_{2s-1,s-1}(z))+d_{2s-1,s-1}^{(0)}\sin(\psi_{2s-1,s-1}(z))}
+\mathcal{O}\big(z^{-1}\big).
\end{gather*}

If $C_2=0$, this expression simplifies again
\begin{gather*}
\sigma_{2s-1}(z)=\sqrt{\frac{z}{2}}\cot(\psi_{2s-1,s-1}(z))+\mathcal{O}\big(z^{-1}\big).
\end{gather*}

In both cases, the asymptotic approximation is valid away from the zeros of the denominators that appear in the leading terms, we refer the reader to \cite[Chapter 1, Section~8.1]{Olver_asymp} for the general theory.

Similar calculations lead to the asymptotic expansions for $p_n(z)$, although care is needed because the leading terms may come from subleading ones before, as a result of differentiation of the trigonometric functions. Straightforward manipulations, using the fact that $p_n(z)=-2\sigma_n'(z)$ and $q_n(z)=\sigma_{n-1}(z)-\sigma_n(z)$, for $n\geq 1$, which follows from~\eqref{qpsigma}, lead to~\eqref{p_osc_C2}, \eqref{q_osc_C2}, \eqref{p_osc_C20}, and~\eqref{q_osc_C20}.

Higher order terms in these asymptotic expansions can be computed by using the corresponding differential equations. This is quite straightforward in the non-oscillatory regime, but more involved in the oscillatory one, since several trigonometric functions with different phase functions intervene.

\appendix
\section[Stokes phenomenon for the $\Ai$ solution]{Stokes phenomenon for the $\boldsymbol{\Ai}$ solution}\label{ApA}

The case $C_2=0$, where the seed function only contains the Airy~$\Ai$ functions, is especially relevant both because of its asymptotic behavior and in applications. It is interesting to observe that in this case we have strong asymptotics for $\tau_n(z)$ in the whole cut plane $|\arg(-z)|<\pi$ involving only one exponential factor, see \eqref{taun_nonosc_C20}; however, we can also use the rotational symmetry given by Lemma~\ref{lemma_sym}, to study the asymptotics in the regions $0<\arg z<\frac{2\pi}{3}$ and \smash{$-\frac{2\pi}{3}<\arg z<0$}. This calculation uses a different seed function in the original sector $|\arg (-z)|<\frac{\pi}{3}$ and it gives subdominant exponential terms. These do not affect the leading asymptotic behavior, but they give a non-linear Stokes phenomenon for this family of solutions of $\PII$, in the spirit of Its and Kapaev \cite{IK_2003}, see also \cite[Chapter~11]{FIKN}.

For instance, if we consider the sector $0<\arg z<\frac{2\pi}{3}$ and the seed function with $\widetilde{C}_1=1$ and $\widetilde{C}_2=0$
(corresponding to the pure Airy $\Ai$ function in this rotated sector), we have from \eqref{taun_nonosc_C20} the following asymptotic approximation
\begin{gather*}
\tau_n(z)=K_n (-z)^{-\frac{n^2}{4}}\mathbf{A}_{n,n}(z)\ee^{-\frac{\sqrt{2}}{3}n (-z)^{\frac{3}{2}}}\\
\hphantom{\tau_n(z)}{} =(-1)^{\left \lfloor{n/2}\right \rfloor} K_n G(n+1)(-z)^{-\frac{n^2}{4}}
\big(1+\mathcal{O}\big((-z)^{-\frac{3}{2}}\big)\big)\ee^{-\frac{\sqrt{2}}{3}n (-z)^{\frac{3}{2}}}.
\end{gather*}

However, applying Lemma~\ref{lemma_sym}, we can use $\tau_n(z)=\ee^{\frac{2\pi\ii}{3}n(n-1)}\tau_n\big(z\ee^{\frac{2\pi\ii}{3}}\big)$, and then the seed function in the region $|\arg(-z)|<\frac{\pi}{3}$ is $\varphi\big(z\ee^{\frac{2\pi\ii}{3}}\big)$, with constants
\begin{gather*}
C_1=\tfrac{1}{2}\ee^{-\frac{\pi\ii}{3}}, \qquad C_2=\tfrac{1}{2}\ee^{\frac{\pi\ii}{6}}.
\end{gather*}

After simplification, the first two coefficients $\mathbf{A}_{n,0}\big(z\ee^{\frac{2\pi\ii}{3}}\big)$ and $\mathbf{A}_{n,1}\big(z\ee^{\frac{2\pi\ii}{3}}\big)$ give
\begin{gather*}
\tau_n(z)=(-1)^{\left \lfloor{n/2}\right \rfloor} K_n G(n+1)(-z)^{-\frac{n^2}{4}}\ee^{-\frac{\sqrt{2}}{3}n(-z)^{\frac{3}{2}}}\\
\hphantom{\tau_n(z)=}{} \times \left[ 1 +(-1)^n \frac{2^{\frac{5n-7}{2}}\ii (-z)^{\frac{3}{2}(n-1)}}{\Gamma(n)}\ee^{\frac{2\sqrt{2}}{3}(-z)^{\frac{3}{2}}}
\right]\big(1+\mathcal{O}\big((-z)^{-\frac{3}{2}}\big)\big).
\end{gather*}

Other exponentially small contributions can be calculated in a similar way. As a consequence, we have a similar Stokes phenomenon for the Painlev\'e functions:
\begin{gather*}
\sigma_n(z)=-\frac{n(-z)^{1/2}}{\sqrt{2}}
\left[1+(-1)^{n+1}\frac{2^{\frac{5n-5}{2}}\ii}{\Gamma(n+1)}(-z)^{\frac{3}{2}(n-1)} \ee^{\frac{2\sqrt{2}}{3}(-z)^{\frac{3}{2}}}\right]\big(1+\mathcal{O}\big((-z)^{-\frac{3}{2}}\big)\big),
\\
p_n(z)=-\frac{n}{\sqrt{2} (-z)^{1/2}}
\left[1+(-1)^{n+1}\frac{2^{\frac{5n}{2}-2}\ii}{\Gamma(n)}(-z)^{\frac{3}{2}n}\ee^{\frac{2\sqrt{2}}{3}(-z)^{\frac{3}{2}}}\right]\big(1+\mathcal{O}\big((-z)^{-\frac{3}{2}}\big)\big),
\end{gather*}
and
\begin{gather*}
q_n(z)=\frac{(-z)^{1/2}}{\sqrt{2}}
\left[1+(-1)^{n+1}\frac{2^{\frac{5n-5}{2}}\ii}{\Gamma(n)}(-z)^{\frac{3}{2}(n-1)}\ee^{\frac{2\sqrt{2}}{3}(-z)^{\frac{3}{2}}}\right]\big(1+\mathcal{O}\big((-z)^{-\frac{3}{2}}\big)\big),
\end{gather*}
for $n\geq 1$.

Similar calculations can be carried out in the sector $\frac{4\pi}{3}<\arg z<2\pi$, using the seed function $\varphi\big(z\ee^{-\frac{2\pi\ii}{3}}\big)$ with constants
\begin{gather*} C_1=\tfrac{1}{2}\ee^{\frac{\pi\ii}{3}}, \qquad C_2=\tfrac{1}{2}\ee^{-\frac{\pi\ii}{6}}. \end{gather*}

As mentioned before, these extra exponential terms are not strictly needed in the non-oscillatory regime $|\arg(-z)|<\pi$, but they become relevant when one examines the transition between exponential and trigonometric behavior.

An alternative way to calculate these subleading exponential terms would be to extend the asymptotic expansion obtained in the oscillatory regime, with trigonometric functions, to a wider sector in $\mathbb{C}$ around the positive real axis. Then in the overlapping region between non-oscillatory and oscillatory behavior the two asymptotic expansions should coincide, and it would be possible to extract the subleading terms.

\subsection*{Acknowledgements}
The author acknowledges financial support from the EPSRC grant ``Painlev\'e equations: analytical properties and numerical computation", reference EP/P026532/1, and from the project MTM2015-65888-C4-2-P from the Spanish Ministry of Economy and Compe\-titi\-vity. The author wishes to thank M.~Fasondini, D.~Huybrechs, A.~Iserles, A.R.~Its, A.B.J.~Kuijlaars, A.F.~Loureiro, C.~Pech and W.~Van Assche for stimulating discussions on the topic and scope of this paper, as well as the organisers of the workshop ``Painlev\'e Equations and Applications'' held at the University of Michigan, August 25--29, 2017, for their hospitality. The comments, remarks and corrections of the anonymous referees have lead to an improved version of the paper, and they are greatly appreciated.

\pdfbookmark[1]{References}{ref}
\LastPageEnding

\end{document}